\documentclass[12pt]{amsart}
\usepackage{epsfig}
\usepackage{mathtools}
\usepackage[subnum]{cases}
\usepackage{enumerate}
\usepackage{bbm,bm}
\usepackage{amsmath,amssymb,mathrsfs}
\usepackage{color}
\textwidth6.6truein \oddsidemargin-.5truecm \evensidemargin-.5truecm
%\usepackage[bookmarks=true,
%bookmarksnumbered=true, breaklinks=true,
%pdfstartview=FitH, hyperfigures=false,
%plainpages=false, naturalnames=true,
%colorlinks=true,pagebackref=true,
%pdfpagelabels]{hyperref}

\newtheorem{theorem}{Theorem}[section]

\newtheorem{lemma}[theorem]{Lemma}
\newtheorem{corollary}[theorem]{Corollary}
\newtheorem{remark}[theorem]{Remark}
\newtheorem{proposition}[theorem]{Proposition}

\newcommand{\R}{\mathbb{R}} 

\newcommand{\fconc}{{\mbox{\rm Conc}(0,\infty)}}
\newcommand{\fconv}{{\mbox{\rm Conv}(0,\infty)}}
\newcommand{\fconcstargen}{{\mbox{\rm Conc}^{-}(0,\infty)}}
\newcommand{\fconvstargen}{{\mbox{\rm Conv}(0,\infty)}}

\newcommand{\fconvstar}{{\mbox{\rm Conv}(0,\infty)}}

\newcommand{\fconcstardualgen}{\mbox{\rm Conc}^+(0,\infty)}

\newcommand{\K}{\mathcal{K}_o^n}

\newcommand{\ISphi}{{\mbox{\rm IS}_\varphi}}
\newcommand{\iSphi}{{\mbox{\rm is}_\varphi}}
\newcommand{\OSphi}{{\mbox{\rm OS}_\varphi}}
\newcommand{\oSphi}{{\mbox{\rm os}_\varphi}}

\newcommand{\ISpsi}{{\mbox{\rm IS}_\psi}}
\newcommand{\iSpsi}{{\mbox{\rm is}_\psi}}
\newcommand{\OSpsi}{{\mbox{\rm OS}_\psi}}
\newcommand{\oSpsi}{{\mbox{\rm os}_\psi}}

\newcommand{\OSphistar}{{\mbox{\rm OS}_{\varphi^*}}}

\newcommand{\ISpsistar}{{\mbox{\rm IS}^*_\psi}}
\newcommand{\iSpsistar}{{\mbox{\rm is}^*_\psi}}
\newcommand{\OSpsistar}{{\mbox{\rm OS}^*_\psi}}
\newcommand{\oSpsistar}{{\mbox{\rm os}^*_\psi}}

\newcommand{\oSp}{{\mbox{\rm os}_{p}}}
\newcommand{\iSpstar}{{\mbox{\rm is}_{p}^*}}

\newcommand{\ISp}{{\mbox{\rm IS}_{p}}}
\newcommand{\OSp}{{\mbox{\rm OS}_{p}}}

\DeclareMathOperator{\vrad}{vrad}
\DeclareMathOperator{\as}{as}
\newcommand{\Cin}{\mathscr{C}_{{\rm in}}}
\newcommand{\Cout}{\mathscr{C}_{{\rm out}}}

%%%%%%%%%%%%%%%%%%%%%%%%%%%%%%%%%%%%%%%%%%%%%%%%%%
\begin{document}

\title{Extremal general  affine surface areas} 
\date{\today}

    \author[Steven Hoehner]{Steven Hoehner}

	\subjclass[2020]{Primary: 52A20, 52A23, 52A40} \keywords{convex body, affine surface area, affine isoperimetric inequality, Blaschke-Santal\'o inequality, inverse Santal\'o inequality}	
\maketitle

\begin{abstract}
For a convex body $K$ in $\R^n$, we introduce and study the extremal general affine surface areas, defined by
\[
\ISphi(K):=\sup_{K^\prime\subset K}\as_\varphi(K),\quad \oSpsi(K):=\inf_{K^\prime\supset K}\as_\psi(K)
\]
where $\as_\varphi(K)$ and $\as_\psi(K)$ are the $L_\varphi$ and $L_\psi$ affine surface area of $K$, respectively. We prove that  there exist extremal convex bodies that achieve the supremum and infimum, and that the functionals $\ISphi$ and $\oSpsi$ are continuous. In our main results, we prove  Blaschke-Santal\'o type inequalities and inverse Santal\'o type inequalities for the  extremal general affine surface areas. This article may be regarded as an Orlicz extension of the recent work of Giladi, Huang, Sch\"utt and Werner (2020), who introduced and studied the extremal $L_p$ affine surface areas.
\end{abstract}

%%%%%%%%%%%%%%%%%%%%%%%%%%%%%%%%%%%%%%%%%%%%%%

\section{Introduction}

The celebrated result of Fritz John \cite{John1948} says that every convex body $K$ in $\R^n$  contains a unique ellipsoid of maximum volume,  now called the John ellipsoid of $K$. The minimum volume ellipsoid containing $K$ is also unique and  is called the L\"owner ellipsoid of $K$. The John and L\"owner ellipsoids are related by polar duality: If $K$ contains the origin in its interior, then the John ellipsoid is the polar body of the L\"owner ellipsoid, and vice versa. These ellipsoids are cornerstones of asymptotic convex geometry, arising in concentration of volume, the reverse isoperimetric inequality, the hyperplane conjecture and many more \cite{AGA-book, isotropic-book}. Other prominent applications of the John and L\"owner ellipsoids can be found in Banach space geometry \cite{BourgainMilman, FLM, Pisier1989, Szarek1978},  extremal problems \cite{Ball1991,Barthe1998, GM2000, Schmuck1999}, polytopal approximation of convex bodies \cite{Gruber1993, Ludwig1999, SW2003},  linear programming \cite{GLS1993, TY1997} 
and statistics \cite{Davies92, Rousseeuw1985,VanAelstRousseeuw}. %Numerous generalizations and analogues of these ellipsoids have been put forth and studied; for some examples, see \cite{}.

%An interesting question that arises is: what happens if volume is replaced by affine surface area in John's theorem? 

In this article, we study an analogue of John's theorem when volume  is replaced by the general  $L_\varphi$ or $L_\psi$ affine surface area, which is defined in terms of a special type of concave function $\varphi$ or convex function $\psi$, respectively.  More specifically, instead of considering the ellipsoid of maximum (resp. minimum) volume contained in (containing) $K$, we will consider the convex body of maximum $L_\varphi$ (minimum $L_\psi$) affine surface contained in (containing) $K$. 
%maximum (minimum) volume ellipsoid contained in (containing) $K$, we will consider the  maximum $L_\varphi$ (minimum   $L_\psi$) affine surface area convex body contained in (containing) $K$. 
When $K$ has centroid at the origin, we define the {\it inner maximal $L_\varphi$ affine surface area} $\ISphi(K)$ and {\it outer minimal $L_\psi$ affine surface area} $\oSpsi(K)$ of $K$ by
\begin{align}
    \ISphi(K)&:=\sup_{K^\prime \subset K}\as_\varphi(K^\prime)\label{defn1}\\
    \oSpsi(K)&:=\inf_{K^\prime\supset K}\as_\psi(K^\prime)\label{defn2}.
\end{align}
Here $\as_\varphi(K)$ and $\as_\psi(K)$ denote the general $L_\varphi$ and $L_\psi$ affine surface area of $K$, respectively, which are extensions of the classical affine surface area defined by Sch\"utt and Werner \cite{SW1990}. The supremum and infimum are taken over all $K^\prime$ with centroid at the origin  (see  Section \ref{genASsect} for the definitions).

The special case of \eqref{defn1} when $\as_\varphi$ is replaced by the classical affine surface area has attracted considerable interest. Inscribed bodies of maximal affine surface area have found  applications to discrete geometry \cite{Barany1995, Barany97, Barany1999, BP2006}, geometric probability \cite{Barany1999} and variational problems in differential equations \cite{LiuZhou2013, TW2004}. The existence of a convex body $K_{\max}\subset K$ of maximum affine surface area follows from the upper semicontinuity of the affine surface area \cite{Barany97, Schneider2014}. The regularity properties of $K_{\max}$ were studied by Sheng, Trudinger and Wang \cite{STW2006}. For planar convex bodies, B\'ar\'any proved that $K_{\max}$  is unique, and showed a remarkable relationship between $K_{\max}$ and the limit shape of the convex hull of certain lattice points \cite{Barany97, BP2006} or random points \cite{Barany1999} contained in $K$. B\'ar\'any and Prodromou \cite{BP2006} showed that if $K$ is a planar convex body, then $K_{\max}$ is of elliptic type, and if $K$ is of elliptic type then $K_{\max}=K$.  Schneider \cite{Schneider2014} used a new method to  extend the latter result to all dimensions, showing that if $K$ is a convex body in $\R^n$ of elliptic type, then $K_{\max}=K$.

Recently, Giladi, Huang, Sch\"utt and Werner \cite{Giladietal} introduced and studied the extremal $L_p$ affine surface areas
\begin{align}
    \ISp(K)&:=\sup_{K^\prime \subset K}\as_p(K^\prime)\label{defn1Lp}\\
    \oSp(K)&:=\inf_{K^\prime\supset K}\as_p(K^\prime)\label{defn2Lp}
\end{align}
where $\as_p(K)$ is the $L_p$ affine surface area of $K$. It was shown in \cite{Giladietal} that for any convex body $K$ with centroid at the origin, the extremal bodies exist, and continuity and  affine isoperimetric inequalities were proved  for the functionals $\ISp$ and $\oSp$.  Asymptotic estimates were also given  for $\ISp(K)$ and $\oSp(K)$ in terms of powers of the volume of $K$ using the L\"owner ellipsoid and thin shell estimate of \cite{GM2011}. 

  When a certain concave or convex function depending on $n$ and $p$ is chosen in \eqref{defn1} or \eqref{defn2}, respectively, we recover \eqref{defn1Lp} and \eqref{defn2Lp} as special cases. Thus, the new definitions \eqref{defn1} and \eqref{defn2} may be thought of as Orlicz extensions of the definitions \eqref{defn1Lp} and \eqref{defn2Lp}  (see Section \ref{Lpsection} below for the details). A key difference between the $L_p$ affine surface areas and the general $L_\varphi$ and $L_\psi$ affine surface areas is that the former are homogeneous, while the latter, in general, are not.  %Similarly, the extremal $L_p$ affine surface areas of \cite{Giladietal} are homogeneous, but the functionals $\ISphi$ and $\oSpsi$ are not. 
  For our purposes, the lack of homogeneity will only present difficulties for the $L_\varphi$ affine surface areas. To surmount this obstacle, we will restrict our attention to certain subclasses of concave functions $\varphi$ satisfying mild growth rate conditions. These subclasses  contain many  functions that are not homogeneous of any degree, which  distinguishes the results in this work from those in  \cite{Giladietal}. At the same time, these classes contain the homogeneous functions considered in the $L_p$ setting of \cite{Giladietal}.

In the present paper, we prove existence and continuity of the extremal  general affine surface areas \eqref{defn1} and \eqref{defn2}. %More specifically, we show that for any convex body $K$ in $\R^n$, there exist convex bodies $K_{\max}\subset K$ and $K_{\min}\supset K$  such that $\as_\varphi(K_{\max})=\ISphi(K)$ and $\as_\psi(K_{\min})=\oSpsi(K)$. Next, we show that the functionals $\ISphi$ and $\oSpsi$ are continuous are continuous with respect to the Hausdorff metric, a key difference from the $L_\varphi$ and $L_\psi$ affine surface areas. We also prove affine isoperimetric inequalities for the extremal affine surface areas, and use them to show 
Our main result is a  Blaschke-Santal\'o type inequality for the inner maximal $L_\varphi$ affine surface area. More specifically, in Theorem \ref{BSvarphi} we show that for any convex body $K$ in $\R^n$ with centroid at the origin and any concave function $\varphi$ satisfying some prescribed conditions,
\begin{equation}\label{mainresult}
\ISphi(K)\ISphi(K^\circ)\leq \ISphi(B_n)^2%=n^2| B_n|^2
\end{equation}
with equality if and only if $K$ is an ellipsoid (here $B_n$ denotes the Euclidean unit ball $\R^n$ centered at the origin). Finally, we prove an inverse Santal\'o type inequality for the outer minimal $L_\psi$ affine surface area.

%%%%%%%%%%%%%%%%%%
\subsection{Overview}

In Section \ref{Backgroundsec}, we state definitions and notation used throughout the paper,  and provide the relevant background on the general $L_\varphi$ and $L_\psi$ affine surface areas. In Section \ref{dualitysection}, we use a polar duality relation of Ludwig \cite{Ludwig2010} for the $L_\varphi$ affine surface areas to define the subclasses of concave functions $\varphi$ we will consider. Next, in Section \ref{defsection} we define the extremal general affine surface areas and state their existence, monotonicity and continuity properties; for the reader's convenience, the proofs of these properties are included at the end of the paper in Section \ref{proofs}. The lemmas that will be used in the proofs of the main results are in Section \ref{lemmas}, and  in Section \ref{isoperimetry} we prove affine isoperimetric inequalities for the extremal general affine surface areas. In Subsections \ref{BSsect} and \ref{invsantsec} we prove our main results, which are Blaschke-Santal\'o type inequalities and inverse Santal\'o type inequalities for the extremal general affine surface areas. %Finally, in Section \ref{proofs}, we prove the existence and continuity properties stated in Section \ref{defsection}.

%%%%%%%%%%%%%%%%%%%%%%%%%%%%%%%%%%%%%%%%%

\section{Background and notation}\label{Backgroundsec}

The standard inner product of $x,y\in\R^n$ is denoted $\langle x,y\rangle$. The Euclidean unit ball in $\R^n$ centered at the origin $o$ is the set  $B_n=\{x\in\R^n: \langle x,x\rangle \leq 1\}$. The Minkowski sum $A+B$ of two sets $A,B\subset \R^n$ is defined by $A+B=\{a+b: a\in A, \, b\in B\}$.  We say that a function $f:(0,\infty)\to (0,\infty)$ is {\it homogeneous of degree $r$} if $f(\lambda t)=\lambda^r f(t)$ for all $\lambda,t>0$.

A {\it convex body} in $\R^n$ is a convex, compact set with nonempty interior. Denote the class of convex bodies in $\R^n$ that contain the origin in their interiors by $\K$. For convex bodies $K$ and $L$ in $\R^n$, the {\it Hausdorff distance} $\delta_H(K,L)$ of $K$ and $L$ is defined as 
\[
\delta_H(K,L) = \inf\{\varepsilon\geq 0: K\subset L+\varepsilon B_n,\, L\subset K+\varepsilon B_n\}.
\]

The $n$-dimensional volume of $K$ is $|K|=\int_{\R^n}\mathbbm{1}_K(x)\,dx$. The {\it volume radius} $\vrad(K)$ of $K$ is the radius of the Euclidean ball with the same volume as $K$, that is, $\vrad(K)=(|K|/|B_n|)^{1/n}$. The centroid $g(K)$ of $K$ is defined by $g(K)=|K|^{-1}\int_K x\,dx$. If the centroid of $K$ is not the origin, then we translate $K$ so that it is. % There exists $x_K\in\R^n$ such that the centroid of the translated body $K+x_K$ is the origin. 
If $K\in\mathcal{K}_o^n$, then the {\it polar body} $K^\circ$ of $K$ is the convex body $K^\circ=\{x\in\R^n: \langle x,y\rangle \leq 1, \, \forall y\in K\}$. The bipolar theorem states that  $(K^\circ)^\circ=K$. The polarity operation is inclusion-reversing, i.e., $L\subset K$ if and only if $L^\circ\supset K^\circ$. 

We also let $\partial K$ denote the boundary of $K$.   In particular, the Euclidean unit ball has surface area $|\partial B_n|=n|B_n|$. The Gaussian curvature of $K$ at $x\in\partial K$ is denoted $\kappa(K,x)$, $\mu_{\partial K}$ is the usual surface measure on $\partial K$ and $N_{\partial K}(x)$ is the outer unit normal at $x\in\partial K$. For more background on convex geometry, we refer the reader to, e.g., the monograph of Schneider \cite{SchneiderBook}.
%%%%%%%%%%%%%%%%%%%%%%%

\subsection{General $L_\varphi$ and $L_\psi$ affine surface areas}\label{genASsect}

 In the past decade or so, there has been tremendous interest  in extending results from the $L_p$  theory of convex bodies to the rapidly growing  Orlicz  theory.  Prominent examples  include  the Orlicz-Petty projection inequality \cite{LYZ2010-1} and  Orlicz centroid inequalities \cite{CZY2011, LiLeng2011, LYZ2010-2, Zhu2012}, the Orlicz-Brunn-Minkowski theory \cite{GHXY2020, XJL2014}, the Orlicz-Minkowski problem \cite{HLYZ2010, HuangHe2012, JianLu2019},   Orlicz affine isoperimetric inequalities \cite{UmutDeping, Deping2012, Deping2014Steiner, Deping2015} and Orlicz-John ellipsoids \cite{ZX2014}. The $L_\varphi$ and $L_\psi$ general affine surface areas, which are generalizations of the classical affine surface area,  have played a key role in the initiation and development of this program. 

The $L_\varphi$ affine surface area has its origins in valuation theory, arising in a fundamental  result on the classification of upper semicontinuous $\text{SL}(n)$ invariant valuations on $\K$. In the groundbreaking work \cite{LudwigReitzner}, Ludwig and Reitzner proved that a  functional $\Phi:\K\to\R$ is an upper semicontinuous and $\text{SL}(n)$ invariant valuation that vanishes on polytopes in $\K$ if and only if there exists a concave function $\varphi:[0,\infty)\to[0,\infty)$ with $\lim_{t\to 0}\varphi(t)=\lim_{t\to\infty}\varphi(t)/t=0$ such that  
\begin{equation}\label{valdef}
\Phi(K)= \int_{\partial K}\varphi\left(\frac{\kappa(K,x)}{\langle x,N_{\partial K}(x)\rangle^{n+1}}\right)\,\langle x,N_{\partial K}(x)\rangle\,d\mu_{\partial K}(x)
\end{equation}
for every $K\in\K$. The integral on the right-hand side of \eqref{valdef} is called the {\it $L_\varphi$ affine surface area} of $K$ and is denoted by $\as_\varphi(K)$. That is,
\begin{equation}\label{genas1}
    \as_\varphi(K) := \int_{\partial K}\varphi\left(\frac{\kappa(K,x)}{\langle x,N_{\partial K}(x)\rangle^{n+1}}\right)\,\langle x,N_{\partial K}(x)\rangle\,d\mu_{\partial K}(x).
\end{equation}

In view of this result, let $\fconc$ denote the class of all concave functions $\varphi:(0,\infty)\to(0,\infty)$ such that  $\lim_{t\to 0}\varphi(t)=\lim_{t\to\infty}\frac{\varphi(t)}{t}=0$, and set $\varphi(0)=0$. (Examples of such functions include $\varphi(t)=t^\alpha$ for $\alpha\in(0,1)$,  $\varphi(t)=\log(t+1)$ and $\varphi(t)=\arctan(t)$.) When $\varphi(t)=t^{\frac{p}{n+p}}$ for $p>0$, one recovers the $L_p$ affine surface area; further choosing $p=1$ so that $\varphi(t)=t^{\frac{1}{n+1}}$, one obtains the classical affine surface area (see Subsection \ref{Lpsection}).

The $L_\psi$ affine surface area was introduced and studied in \cite{Ludwig2010} as a generalization of the $L_p$ affine surface area for $p\in(-n,0)$. Let $\fconv$ denote the class of all convex functions $\psi:(0,\infty)\to(0,\infty)$ such that  $\lim_{t\to 0}\psi(t)=\infty$ and $\lim_{t\to\infty}\psi(t)=0$, and set $\psi(0)=\infty$. (Examples of such functions include $\psi(t)=t^\alpha$ for $\alpha\in (-\infty,0)$ and $\psi(t)=\log(t^{-1}+1)$.)   
For $K\in\mathcal{K}_o^n$ and $\psi\in\fconv$, the {\it $L_\psi$ general affine surface area} $\as_\psi(K)$ is defined
by \cite{Ludwig2010}
\begin{equation}\label{defaspsi}
    \as_\psi(K) := \int_{\partial K}\psi\left(\frac{\kappa(K,x)}{\langle x,N_{\partial K}(x)\rangle^{n+1}}\right)\,\langle x,N_{\partial K}(x)\rangle\,d\mu_{\partial K}(x).
\end{equation}
Note that if $P\in\K$ is a polytope, then $\kappa(P,x)=0$ for almost all $x\in\partial P$, so  $\as_\psi(P)=\infty$. When $\psi(t)=t^{\frac{p}{n+p}}$ and $p\in(-n,0)$, one recovers the $L_p$ affine surface area  (see Subsection \ref{Lpsection}). 

As mentioned before, for $\varphi\in\fconc$ the   $L_\varphi$ affine surface area is {\it upper semicontinuous}, meaning for any sequence of convex bodies $\{K_j\}_{j\in\mathbb{N}}\subset\K$ that converges to $K$ with respect to the Hausdorff metric,
\begin{equation}\label{uppersemi}
\limsup_{j\to\infty}\as_\varphi(K_j) \leq \as_\varphi(K).
\end{equation}
 For $\psi\in\fconv$, Ludwig \cite{Ludwig2010} proved that the $L_\psi$ affine surface area  is {\it lower semicontinuous}, meaning for any sequence of convex bodies $\{K_j\}_{j\in\mathbb{N}}\subset\K$ that converges to $K$ with respect to the Hausdorff metric,
\begin{equation}\label{lowersemi}
\liminf_{j\to\infty}\as_\psi(K_j) \geq \as_\psi(K).
\end{equation}
The $L_\psi$ affine surface area  is also $\text{SL}(n)$ invariant \cite{Ludwig2010}.

The $L_\varphi$ and $L_\psi$ affine surface areas satisfy the following powerful affine isoperimetric inequalities due to Ludwig \cite{Ludwig2010}.

\begin{lemma}\cite[Thm. 3]{Ludwig2010}\label{isoperlemphi}
Let $K$ be a convex body in $\R^n$ with centroid at the origin. Then for any $\varphi\in\fconc$,
\begin{align*}
    \as_\varphi(K) &\leq \as_\varphi(\vrad(K)B_n). \end{align*}
Moreover, there is equality for strictly increasing $\varphi$ if and only if $K$ is an ellipsoid.
\end{lemma}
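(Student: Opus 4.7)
The plan is to combine Jensen's inequality applied to the concave function $\varphi$ with the Blaschke-Santal\'o inequality, exploiting the fact that the argument $\kappa(K,x)/\langle x,N_{\partial K}(x)\rangle^{n+1}$ appearing in \eqref{genas1} averages, against a natural probability measure on $\partial K$, to the Santal\'o ratio $|K^\circ|/|K|$. Since $\varphi$ is concave, Jensen's inequality will push the integral inside $\varphi$, and the centroid hypothesis then unlocks Blaschke-Santal\'o to compare $K$ with the ball of the same volume.

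Concretely, the divergence-theorem identity $\int_{\partial K}\langle x,N_{\partial K}(x)\rangle\,d\mu_{\partial K}(x)=n|K|$ shows that $d\nu := (n|K|)^{-1}\langle x,N_{\partial K}(x)\rangle\,d\mu_{\partial K}(x)$ is a probability measure on $\partial K$. Using the Gauss map change of variables together with the identity $\langle x,N_{\partial K}(x)\rangle = h_K(N_{\partial K}(x))$, where $h_K$ denotes the support function of $K$, one obtains
\[
\int_{\partial K}\frac{\kappa(K,x)}{\langle x,N_{\partial K}(x)\rangle^{n+1}}\,d\nu(x) = \frac{1}{n|K|}\int_{S^{n-1}}h_K(u)^{-n}\,du = \frac{|K^\circ|}{|K|}.
\]
Applying Jensen's inequality to the concave function $\varphi$ then yields $\as_\varphi(K) \leq n|K|\,\varphi(|K^\circ|/|K|)$. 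I would next observe that every $\varphi\in\fconc$ is automatically non-decreasing: a strict decrease at some point, combined with concavity, would force $\varphi$ to tend to $-\infty$, contradicting positivity on $(0,\infty)$. Since $K$ has centroid at the origin, the Blaschke-Santal\'o inequality gives $|K^\circ|/|K| \leq |B_n|^2/|K|^2$, so monotonicity of $\varphi$ produces
\[
\as_\varphi(K) \leq n|K|\,\varphi(|B_n|^2/|K|^2) = \as_\varphi(\vrad(K)B_n),
\]
where the last equality is a direct computation on the centered ball of radius $\vrad(K)$, on whose boundary the ratio $\kappa/\langle x,N\rangle^{n+1}$ is the constant $\vrad(K)^{-2n}=|B_n|^2/|K|^2$.

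For the equality characterization under the additional hypothesis that $\varphi$ is strictly increasing, equality in the monotonicity step forces $|K||K^\circ|=|B_n|^2$, and the equality case of Blaschke-Santal\'o for centered bodies then characterizes $K$ as an ellipsoid; conversely, on any centered ellipsoid the affinely covariant ratio $\kappa/\langle x,N\rangle^{n+1}$ is constant, making Jensen's inequality an equality, and a short volume calculation confirms equality throughout. The main technical obstacle is that the Gauss map, the curvature $\kappa$, and the change-of-variable formulas used above are a priori valid only when $\partial K$ is sufficiently regular; for a general convex body $\kappa$ exists only almost everywhere in the sense of Aleksandrov. The standard remedy is to approximate $K$ in the Hausdorff metric by centered $C^2_+$ bodies, apply the inequality to the smooth approximants, and pass to the limit using upper semicontinuity \eqref{uppersemi} of $\as_\varphi$ together with the continuity of volume, polarity and $\vrad$ with respect to Hausdorff convergence.
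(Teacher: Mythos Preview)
The paper does not prove this lemma; it is quoted verbatim from Ludwig \cite[Thm.~3]{Ludwig2010} and used as a black box. Your proposal---Jensen's inequality for the concave $\varphi$ against the cone probability measure $d\nu=(n|K|)^{-1}\langle x,N_{\partial K}(x)\rangle\,d\mu_{\partial K}$, followed by monotonicity of $\varphi$ and the Blaschke--Santal\'o inequality---is correct and is exactly the argument Ludwig gives in the cited reference. One small sharpening: for a general convex body the Gauss-map identity you use is only an inequality, $\int_{\partial K}\kappa\langle x,N\rangle^{-n}\,d\mu_{\partial K}\le n|K^\circ|$ (this is \eqref{Linfinity} in the present paper), but since $\varphi$ is nondecreasing this inequality points the right way and the approximation step you outline is not actually needed for the bound itself, only (as you note) for the equality analysis.
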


\begin{lemma}\cite[Thm. 8]{Ludwig2010}\label{isoperlem2}
Let $K$ be a convex body in $\R^n$ with centroid at the origin. Then for any $\psi\in\fconv$,
\begin{align*}
    \as_\psi(K) &\geq \as_\psi(\vrad(K)B_n). \end{align*}
Moreover, there is equality for strictly decreasing $\psi$ if and only if $K$ is an ellipsoid.
\end{lemma}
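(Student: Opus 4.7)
The plan is to combine Jensen's inequality with the Blaschke-Santal\'o inequality, in a manner dual to the standard proof of Lemma \ref{isoperlemphi}: Jensen is applied to the convex function $\psi$ (so the direction of Jensen reverses relative to the concave case), and the monotonicity of $\psi$, rather than its concavity, is what allows one to pass from $|K^\circ|/|K|$ to $|B_n|^2/|K|^2$ via Blaschke-Santal\'o.

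First I would rewrite $\as_\psi(K)$ as an integral against the cone measure $d\mu_1 := \langle x, N_{\partial K}(x)\rangle\, d\mu_{\partial K}(x)$, whose total mass is $n|K|$ by the divergence theorem. Since $\psi$ is convex, Jensen's inequality yields
\begin{equation*}
\as_\psi(K) \;\geq\; n|K|\, \psi\!\left(\frac{1}{n|K|}\int_{\partial K}\frac{\kappa(K,x)}{\langle x, N_{\partial K}(x)\rangle^{n}}\, d\mu_{\partial K}(x)\right).
\end{equation*}
The inner integral is a classical identity evaluating to $n|K^\circ|$, obtained by pushing forward via the Gauss map (with Jacobian $\kappa(K,\cdot)$) and recognizing the resulting spherical integral as $n$ times the volume of the polar body. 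Thus $\as_\psi(K) \geq n|K|\, \psi(|K^\circ|/|K|)$.

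Next I would observe that any $\psi \in \fconv$ is automatically strictly decreasing: convexity, positivity and the boundary conditions $\psi(0^+)=\infty$ and $\psi(\infty)=0$ preclude any nondecreasing or flat portion (a flat piece would force $\psi$ to remain bounded below by a positive constant, contradicting $\psi(\infty)=0$). Combining strict monotonicity with the Blaschke-Santal\'o inequality $|K||K^\circ|\leq |B_n|^2$ (valid since $K$ has centroid at the origin) and the identity $|B_n|^2/|K|^2 = \vrad(K)^{-2n}$ gives
\begin{equation*}
\psi\!\left(|K^\circ|/|K|\right) \;\geq\; \psi\!\left(\vrad(K)^{-2n}\right).
\end{equation*}
A direct computation on $rB_n$ with $r=\vrad(K)$ (constant curvature $r^{-(n-1)}$ and constant support height $r$) produces $\as_\psi(\vrad(K)B_n) = n|K|\,\psi(\vrad(K)^{-2n})$, and chaining the two inequalities closes the argument.

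For the equality characterization under strict monotonicity of $\psi$, the step $\psi(|K^\circ|/|K|) \geq \psi(|B_n|^2/|K|^2)$ is strict unless $|K||K^\circ|=|B_n|^2$; the equality case of the Blaschke-Santal\'o inequality (for $K$ with centroid at the origin) then forces $K$ to be an ellipsoid, and conversely $\mathrm{SL}(n)$-invariance of $\as_\psi$ reduces the ellipsoid case to the trivial case $K=rB_n$. The main obstacle I would anticipate is not the inequality itself but the rigorous execution of the Gauss map integration identity for arbitrary convex bodies: it is immediate in the $C^2_+$ case, but in general requires generalized curvatures in the sense of Alexandrov (as in Ludwig's definition of $\as_\psi$), or a careful approximation argument together with the lower semicontinuity \eqref{lowersemi}.
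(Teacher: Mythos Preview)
The paper does not give its own proof of this lemma; it is quoted verbatim as \cite[Thm.~8]{Ludwig2010} and used as a black box. So there is no in-paper argument to compare against. Your outline is essentially Ludwig's proof: Jensen for the convex $\psi$ against the cone measure, followed by Blaschke--Santal\'o applied through the monotonicity of $\psi$.

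One technical remark. You call
\[
\int_{\partial K}\frac{\kappa(K,x)}{\langle x,N_{\partial K}(x)\rangle^{n}}\,d\mu_{\partial K}(x)=n|K^\circ|
\]
an identity. In the paper's own notation this integral is $\as_{\pm\infty}(K)$, and inequality \eqref{Linfinity} records only $\as_{\pm\infty}(K)\le n|K^\circ|$, with equality precisely for ellipsoids; for a general convex body the Gauss map need not cover $S^{n-1}$ absolutely continuously, and the shortfall is genuine (e.g.\ polytopes give $0$). This does not damage your argument: since $\psi$ is decreasing, the inequality $\as_{\pm\infty}(K)\le n|K^\circ|$ feeds into $\psi(\,\cdot\,)$ in the correct direction, and your chain
\[
\as_\psi(K)\ \ge\ n|K|\,\psi\!\Big(\tfrac{\as_{\pm\infty}(K)}{n|K|}\Big)\ \ge\ n|K|\,\psi\!\Big(\tfrac{|K^\circ|}{|K|}\Big)\ \ge\ n|K|\,\psi\big(\vrad(K)^{-2n}\big)
\]
still closes. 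The equality analysis also survives: strict monotonicity of $\psi$ forces equality in \eqref{Linfinity} and in Blaschke--Santal\'o simultaneously, either of which already pins $K$ down as an ellipsoid. Your side observation that every $\psi\in\fconv$ is automatically strictly decreasing is correct, so the ``strictly decreasing'' hypothesis in the equality clause is in fact vacuous for this class.
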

%%%%%%%%%%%%%%%%%%%
\section{Polar duality and the functional setting}\label{dualitysection}

%Our main tools will be Lemmas \ref{isoperlemphi} and \ref{isoperlem2}, so we will be especially be interested in the behavior of the $L_\varphi$ and $L_\psi$ affine surface areas on Euclidean balls.  
If $K=rB_n$ for some positive number $r$, then by definitions \eqref{genas1} and \eqref{defaspsi} we obtain
\begin{align}
    \as_\varphi(rB_n)
    &=r^n\varphi(r^{-2n})|\partial B_n| \label{asball}\\
    \as_\psi(rB_n)
    &=r^n\psi(r^{-2n})|\partial B_n|.\label{asball2}
\end{align}
Since $\psi$ is decreasing, the $L_\psi$ affine surface areas are increasing on Euclidean balls.  More specifically, 
\begin{equation}\label{monotonepsi}
0<r\leq s \quad \Longrightarrow \quad \as_\psi(rB_n)\leq\as_\psi(sB_n). 
\end{equation}
This is not the case for the $L_\varphi$ affine surface area and a general  function $\varphi\in\fconc$. Fortunately,  we can control the monotonicity of $\as_\varphi$ on Euclidean balls  by restricting the growth rate of the ratio of $\varphi$ and the square root function. We will accomplish this via the following polar duality relation of Ludwig \cite{Ludwig2010}. For $\varphi\in\fconc$, define  $\varphi^*(t):=t\varphi(1/t)$. Then $\varphi^*$ is also an element of $\fconc$. Ludwig \cite[Thm. 4]{Ludwig2010} proved that for every $K\in\K$, 
\begin{equation}\label{duality1}
\as_{\varphi}(K)=\as_{\varphi^*}(K^\circ).
\end{equation}
In this way, the operation $\varphi\mapsto \varphi^*$ induces a duality relationship within the class $\fconc$. Now define $\fconcstargen$ to be the class of all $\varphi\in\fconc$ such that $\varphi(t)/\sqrt{t}$ is strictly decreasing, and  define $\fconcstardualgen$ to be the class of all $\varphi\in\fconc$ such that $\varphi(t)/\sqrt{t}$ is strictly increasing. The classes $\fconcstargen$ and $\fconcstardualgen$ are dual to one another in the following sense.
\begin{lemma}\label{dualconc}
$\varphi\in\fconcstargen$ if and only if $\varphi^*\in\fconcstardualgen$.
\end{lemma}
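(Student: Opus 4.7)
The plan is to prove the equivalence by a direct change-of-variable computation, reducing the claim to the involution $(\varphi^*)^* = \varphi$.

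First I would check that the dualization is well-defined on $\fconc$: by Ludwig's observation, $\varphi \in \fconc$ implies $\varphi^* \in \fconc$, so it is meaningful to ask whether $\varphi^*$ lies in $\fconcstardualgen$. I would also record the involution property $(\varphi^*)^*(t) = t\varphi^*(1/t) = t\cdot\tfrac{1}{t}\varphi(t) = \varphi(t)$, which will let me deduce the ``only if'' direction from the ``if'' direction applied to $\varphi^*$.

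The heart of the argument is a one-line identity. For any $t > 0$,
\[
\frac{\varphi^*(t)}{\sqrt{t}} \;=\; \frac{t\,\varphi(1/t)}{\sqrt{t}} \;=\; \sqrt{t}\,\varphi(1/t) \;=\; \frac{\varphi(s)}{\sqrt{s}} \quad \text{where } s = 1/t.
\]
Since the map $t \mapsto 1/t$ is a strictly decreasing bijection of $(0,\infty)$ onto itself, the function $t \mapsto \varphi^*(t)/\sqrt{t}$ is the composition of $s \mapsto \varphi(s)/\sqrt{s}$ with an order-reversing bijection. Hence $s \mapsto \varphi(s)/\sqrt{s}$ is strictly decreasing if and only if $t \mapsto \varphi^*(t)/\sqrt{t}$ is strictly increasing. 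This gives the forward implication $\varphi \in \fconcstargen \Rightarrow \varphi^* \in \fconcstardualgen$.

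For the converse, assume $\varphi^* \in \fconcstardualgen$. Applying the forward implication with the roles of ``decreasing'' and ``increasing'' swapped (the same identity, read the other way), $\varphi^* \in \fconcstardualgen$ implies $(\varphi^*)^* \in \fconcstargen$; by the involution property this is exactly $\varphi \in \fconcstargen$. I do not anticipate any genuine obstacle here — the only thing to be careful about is making sure ``strictly'' monotonicity is preserved, which it is, because the substitution $s = 1/t$ is a strictly monotone bijection.
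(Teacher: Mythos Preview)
Your proof is correct and is essentially the same as the paper's: both hinge on the identity $\varphi^*(t)/\sqrt{t} = \varphi(1/t)/\sqrt{1/t}$, which the paper writes out as a chain of biconditionals and you phrase as composition with the order-reversing bijection $t\mapsto 1/t$. Your explicit use of the involution $(\varphi^*)^*=\varphi$ for the converse is a clean touch, but the underlying argument is identical.
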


\begin{proof}
Let $\varphi\in\fconcstargen$,  $\varphi^*\in\fconcstardualgen$ %Then with the substitution $s=1/t$,
%\[
%\lim_{s\to 0}\frac{\varphi^*(s)}{\sqrt{s}} = \lim_{s\to 0}\frac{s\varphi(1/s)}{\sqrt{s}} = \lim_{s\to 0}\sqrt{s}\varphi(1/s)=\lim_{t\to\infty}\frac{\varphi(t)}{\sqrt{t}}.
%\]
%Now let 
and $t,u\in(0,\infty)$ with $t<u$. Also let $s=1/t$ and $v=1/u$ and note that $s>v$. Then 
\begin{align*}
\frac{\varphi(t)}{\sqrt{t}}\geq \frac{\varphi(u)}{\sqrt{u}}
&\Longleftrightarrow 
\frac{t\varphi^*(1/t)}{\sqrt{t}}
\geq \frac{u\varphi^*(1/u)}{\sqrt{u}}
\Longleftrightarrow
\sqrt{t}\varphi^*(1/t) \geq \sqrt{u}\varphi^*(1/u)
\Longleftrightarrow
\frac{\varphi^*(s)}{\sqrt{s}} \geq \frac{\varphi^*(v)}{\sqrt{v}}.
\end{align*}
\end{proof}

%The lack of homogeneity and monotonicity of $\as_\varphi$ for general $\varphi$ obstructs translating some results from the $L_p$ setting directly to the $L_\varphi$ setting. As the next lemma shows, 
Note that if $\varphi(t)=\varphi(1)\sqrt{t}$,  then $\varphi(t)=\varphi^*(t)$ and for any $r>0$, $\as_\varphi(rB_n)=\varphi(1)|\partial B_n|$ is constant with respect to $r$. Combined with Lemma \ref{dualconc}, the next result shows that the ``self-dual" function $\varphi(t)=\varphi(1)\sqrt{t}$ serves as a transition where the $L_\varphi$ affine surface area reverses monotonicity on Euclidean balls. %changes from monotone increasing on Euclidean balls when  $\varphi\in\fconcstar$, to monotone decreasing on Euclidean balls when $\varphi^*\in\fconcstardual$.

\begin{lemma}\label{monolem}
Let $\varphi\in\fconcstargen$, and let $r,s>0$ with $r\leq s$. Then
\begin{align*}
    \as_\varphi(rB_n) &\leq \as_\varphi(sB_n)\\
    \as_{\varphi^*}(rB_n) &\geq \as_{\varphi^*}(sB_n).
\end{align*}
\end{lemma}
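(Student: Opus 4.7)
The plan is to reduce both inequalities to the definitions of $\fconcstargen$ and $\fconcstardualgen$ by a direct change of variables in the explicit formula \eqref{asball}.

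First I would write the right-hand side of \eqref{asball} in a form that exposes the ratio $\varphi(t)/\sqrt{t}$. Setting $t := r^{-2n}$ we have $r^n = t^{-1/2}$, so
\[
\as_\varphi(rB_n) \;=\; r^n\varphi(r^{-2n})\,|\partial B_n| \;=\; |\partial B_n|\cdot \frac{\varphi(t)}{\sqrt{t}}.
\]
Since the substitution $r\mapsto r^{-2n}$ is strictly decreasing in $r$, the condition $r\le s$ gives $r^{-2n}\ge s^{-2n}$. Because $\varphi\in\fconcstargen$ means exactly that $\varphi(t)/\sqrt{t}$ is strictly decreasing, evaluating at the larger argument produces the smaller value, which yields
\[
\as_\varphi(rB_n)\;\le\;\as_\varphi(sB_n).
\]

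For the second inequality I would invoke Lemma \ref{dualconc}: since $\varphi\in\fconcstargen$, we have $\varphi^*\in\fconcstardualgen$, i.e.\ $\varphi^*(t)/\sqrt{t}$ is strictly \emph{increasing}. Applying the same substitution to \eqref{asball} with $\varphi$ replaced by $\varphi^*$ gives
\[
\as_{\varphi^*}(rB_n) \;=\; |\partial B_n|\cdot \frac{\varphi^*(t)}{\sqrt{t}},\qquad t=r^{-2n},
\]
and the same inversion of order ($r\le s$ implies $r^{-2n}\ge s^{-2n}$) now produces the reverse inequality $\as_{\varphi^*}(rB_n)\ge\as_{\varphi^*}(sB_n)$. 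Alternatively, one could derive the second statement from the first by invoking the polar duality relation \eqref{duality1} together with $(rB_n)^\circ = r^{-1}B_n$, but the direct computation is cleaner.

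There is no real obstacle here: the whole lemma is essentially the observation that under the substitution $t=r^{-2n}$ the function $r\mapsto \as_\varphi(rB_n)/|\partial B_n|$ \emph{is} the function $\varphi(t)/\sqrt{t}$, composed with an order-reversing map. Consequently the monotonicity hypothesis that defines the classes $\fconcstargen$ and $\fconcstardualgen$ translates directly into monotonicity of $\as_\varphi$ and $\as_{\varphi^*}$ on the one-parameter family of Euclidean balls, with the directions of monotonicity swapped by the order reversal of $r\mapsto r^{-2n}$.
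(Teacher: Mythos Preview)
Your proof is correct and is essentially identical to the paper's own argument: the paper also sets $x=r^{-2n}$, $y=s^{-2n}$, rewrites $\as_\varphi(rB_n)=\dfrac{\varphi(x)}{\sqrt{x}}|\partial B_n|$ via \eqref{asball}, and then uses the monotonicity of $\varphi(t)/\sqrt{t}$ (together with Lemma~\ref{dualconc} for $\varphi^*$) to conclude.
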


\begin{proof}
Set $x=r^{-2n}$ and $y=s^{-2n}$. Then $x\geq y$, so by \eqref{asball} and Lemma \ref{dualconc}, 
\begin{align}\label{phistarmono}
\as_\varphi(rB_n)&=\frac{\varphi(x)}{\sqrt{x}}|\partial B_n| \leq \frac{\varphi(y)}{\sqrt{y}}|\partial B_n|=\as_\varphi(sB_n)\\
\as_{\varphi^*}(rB_n) &= \frac{\varphi^*(x)}{\sqrt{x}}|\partial B_n| \geq \frac{\varphi^*(y)}{\sqrt{y}}|\partial B_n|=\as_{\varphi^*}(sB_n). 
\end{align}
\end{proof}

\begin{remark}\label{fnexample1}
\normalfont The family of functions $\varphi_m:(0,\infty)\to(0,\infty)$ defined  by $\varphi_m(t):=\arctan(t^{1/m})$, $m\geq 2$, is a subclass of $\fconcstargen$; see Subsection \ref{arctanexamplepf} for the details.  %Hence $\fconcstargen$ is nonempty. Similarly, $\varphi_m^*(t):=t\varphi_m(1/t)$ 
Hence by Lemma \ref{dualconc} the family $\varphi_m^*(t)=t\arctan(t^{-1/m})$, $m\geq 2$, is a subclass of $\fconcstardualgen$.  %and $\fconcstardual$ are nonempty.
Note that, for any $r$, the function $\varphi_m$ is not homogeneous of degree $r$. To see this, observe that if a function $f$ is homogeneous of degree $r$, then $f(t)=f(1)t^r$ and hence $f(t)f(1/t)=f(1)^2$ for all $t$. Thus if $\varphi_m$ is homogeneous for some $r$, then $\arctan(t^{1/m})\arctan(t^{-1/m})=\arctan(1)^2=\pi^2/16$  for all $t$, a contradiction. %Similarly, one may check that the function $\tilde{\varphi}_j:(0,\infty)\to(0,\infty)$ defined for $j>0$ by $\tilde{\varphi}_j(t):=\log(t^{\frac{1}{2(j+1)}}+1)$ is an element of $\fconcstargen$ and is not homogeneous of any degree.

\end{remark}

Finally, we remark that the inclusions $\fconcstargen\subsetneq \fconc$ and $\fconcstardualgen\subsetneq\fconc$ are strict, as the example $\varphi(t)=\log(t+1)$ shows.

\vspace{2mm}

%%%%%%%%%%%%%%%%%%%%%%%%%%%%%%
\section{Extremal general affine surface areas}\label{defsection}

For  a convex body $K$ in $\R^n$ with $g(K)=o$,  let
\begin{align*}
    \mathscr{C}_{\text{in}}(K)&:=\{ K^\prime\subset K: K^\prime\text{ is a convex body in }\R^n, \, g(K^\prime)=o\}\\
    \mathscr{C}_{\text{out}}(K)&:=\{K^\prime\supset K: K^\prime \text{ is a convex body in }\R^n, \, g(K^\prime)=o\},
\end{align*}
denote the families of all inscribed and circumscribed bodies with centroid at the origin, respectively. The condition $g(K^\prime)=o$ allows us to apply Lemmas \ref{isoperlemphi} and \ref{isoperlem2}. A  discussion on this centroid assumption  can be found in Subsection \ref{centroidsec}. For $\varphi\in\fconc$, we define the {\it extremal $L_\varphi$ affine surface areas} by
\begin{alignat*}{4}
& \ISphi(K) && := \sup_{K^\prime\in\Cin(K)}\as_\varphi(K^\prime), \quad && \iSphi(K) && :=\inf_{K^\prime\in\Cin(K)}\as_\varphi(K^\prime)\\
& \OSphi(K) && :=\sup_{K^\prime\in\Cout(K)}\as_\varphi(K^\prime), \quad && \oSphi(K) && :=\inf_{K^\prime\in\Cout(K)}\as_\varphi(K^\prime).
\end{alignat*}

\noindent Likewise, for $\psi\in\fconv$ we define the {\it extremal $L_\psi$ affine surface areas} by
\begin{alignat*}{4}
&\ISpsi(K) &&:= \sup_{K^\prime\in\Cin(K)}\as_\psi(K^\prime), \quad &&\iSpsi(K)&&:=\inf_{K^\prime\in\Cin(K)}\as_\psi(K^\prime)\\
&\OSpsi(K)&&:=\sup_{K^\prime\in\Cout(K)}\as_\psi(K^\prime), \quad &&\oSpsi(K)&&:=\inf_{K^\prime\in\Cout(K)}\as_\psi(K^\prime).
\end{alignat*}
These definitions were motivated by those in \cite{Giladietal}, where the  extremal $L_p$ affine surface areas were introduced; choosing a specific function $\varphi$ or $\psi$ defined in terms of $n$ and $p$, we recover their definitions (see Subsection \ref{Lpsection}). We will be especially interested in the {\it inner maximal $L_\varphi$ affine surface area} $\ISphi(K)$ and the {\it outer minimal $L_\psi$ affine surface area} $\oSpsi(K)$. % As discussed in the introduction, the extremal bodies   can be considered analogues of the John and L\"owner ellipsoids of $K$. 
In Lemma \ref{existence}, we  show that for any convex body $K\in\mathcal{K}_o^n$ with $g(K)=o$ and any $\varphi\in\fconcstargen$ and $\psi\in\fconvstargen$, there exist $K_{\max}^\varphi\in\Cin(K)$ and $K_{\min}^\psi\in\Cout(K)$ satisfying $\as_\varphi(K_{\max}^\varphi)=\ISphi(K)$ and $\as_\psi(K_{\min}^\psi)=\oSpsi(K)$. 

We begin by ruling out the trivial cases in the definitions above. Since the $L_\varphi$ and $L_\psi$ affine surface areas have a ``$0/\infty$ property" on polytopes, one might expect the same for some of the extremal affine surface areas.
Indeed, if $P\in\Cin(K)$ is a polytope, then $\as_\varphi(P)=0$ and $\as_\psi(P)=\infty$, so $\iSphi(K)=0$ and $\ISpsi(K)=\infty$. Similarly, if $Q\in\Cout(K)$ is a polytope, then  $\oSphi(K)=0$ and $\OSpsi(K)=\infty$. Moreover, by \eqref{asball2} and the definition of $\fconv$ we obtain 
\begin{align*}
\iSpsi(K) \leq \inf_{rB_n\in\Cin(K)}\as_\psi(rB_n) 
&=\inf_{rB_n\in\Cin(K)}r^n\psi(r^{-2n})|\partial B_n|=0.%\\
%= |\partial B_n|\inf_{r\geq 0}r^n\psi(r^{-2n})
%=\inf_{r\geq 0}\frac{\psi(r^{-2n})}{r^{-n}}|\partial B_n|
%&\leq \liminf_{r\to 0}\frac{\psi(r^{-2n})}{r^{-n}}|\partial B_n|\\
%&=\liminf_{t\to\infty}\frac{\psi(t)}{\sqrt{t}}|\partial B_n|
%\leq \lim_{t\to\infty}\psi(t)|\partial B_n|=0.
\end{align*}
%Since $\psi\in\fconv$, $\lim_{t\to\infty}\psi(t)=0$, and hence
%\[
%.
%\]
%This shows that $\iSpsi(K)=0$.

Let $K\in\K$. Then $K^\prime\in\Cin(K)$ if and only if $(K^\prime)^\circ\in\Cout(K^\circ)$. Thus by \eqref{duality1},
\begin{equation}\label{duality}
\text{IS}_{\varphi}(K)
=\sup_{K^\prime\in\Cin(K)}\as_{\varphi}(K^\prime)
=\sup_{(K^\prime)^\circ \in \Cout(K^\circ)}\as_{\varphi^*}((K^\prime)^\circ)=\OSphistar(K^\circ),
\end{equation}
provided the suprema exist. Note that if $\varphi\in\fconcstargen$ and $\lim_{t\to 0}\frac{\varphi(t)}{\sqrt{t}}=\infty$, then by Lemmas \ref{dualconc} and \ref{monolem},
\begin{align*}
    \text{IS}_{\varphi^*}(K) = \sup_{K^\prime\in\Cin(K)}\as_{\varphi^*}(K^\prime) &\geq \sup_{rB_n\in\Cin(K)}\as_{\varphi^*}(rB_n)
    =\sup_{rB_n\in\Cin(K)}\frac{\varphi^*(r^{-2n})}{r^{-n}}|\partial B_n|=\infty.
    %=\sup_{t^{-\frac{1}{2n}}B_n\in\Cin(K)}\frac{\varphi^*(t)}{\sqrt{t}}|\partial B_n|\\
    %&=\sup_{t^{-\frac{1}{2n}}B_n\in\Cin(K)}\sqrt{t}\varphi(1/t)|\partial B_n|
   % &=\sup_{s^{\frac{1}{2n}}B_n\in\Cin(K)}\frac{\varphi(s)}{\sqrt{s}}|\partial B_n|
    %\geq \limsup_{s\to 0}\frac{\varphi(s)}{\sqrt{s}}|\partial B_n|=\infty.
\end{align*}
Moreover, since
\[
\lim_{t\to\infty}\frac{\varphi^*(t)}{\sqrt{t}} = \lim_{t\to\infty}\sqrt{t}\varphi(1/t)=\lim_{s\to 0}\frac{\varphi(s)}{\sqrt{s}},
\]
by Lemma \ref{dualconc} we have $\varphi\in\fconcstargen$ and $\lim_{t\to 0}\frac{\varphi(t)}{\sqrt{t}}=\infty$ if and only if $\varphi^*\in\fconcstardualgen$ and $\lim_{t\to\infty}\frac{\varphi^*(t)}{\sqrt{t}}=\infty$. For any such function $\varphi$, we have $\OSphi(K)=\infty$.% We will not consider the functions $\OSphi$ or $\text{IS}_{\varphi^*}$ in this paper.

%%%%%%%%%%%%%%%

 \subsection{Extremal $L_\psi^*$ affine surface areas}
 
 For $K\in\K$ and $\psi\in\fconv$, Ludwig \cite{Ludwig2010}  defined the $L_\psi^*$ affine surface area by $\as^*_\psi(K):=\as_\psi(K^\circ)$. When   $\psi(t)=t^{\frac{n}{n+p}}$ and $p<-n$, one recovers the $L_p$ affine surface area defined by Sch\"utt and Werner \cite{SW2004}. This leads to the definition of the extremal $L_\psi^*$ affine surface areas: 
 \begin{alignat*}{4}
& \ISpsistar(K) && := \sup_{K^\prime\in\Cin(K)}\as^*_\psi(K^\prime), \quad && \iSpsistar(K) && :=\inf_{K^\prime\in\Cin(K)}\as^*_\psi(K^\prime)\\
& \OSpsistar(K) && :=\sup_{K^\prime\in\Cout(K)}\as^*_\psi(K^\prime), \quad && \oSpsistar(K) && :=\inf_{K^\prime\in\Cout(K)}\as^*_\psi(K^\prime).
\end{alignat*}
Again, since $K^\prime\in\Cin(K)$ if and only if $(K^\prime)^\circ\in\Cout(K^\circ)$, we have  
\begin{align*}
    \ISpsistar(K)&=\OSpsi(K^\circ)=\infty\\
    \OSpsistar(K) &= \ISpsi(K^\circ)=\infty\\
    \oSpsistar(K)&=\iSpsi(K^\circ)=0
\end{align*}
and
\begin{equation}\label{dualLpsi}
    \iSpsistar(K) = \oSpsi(K^\circ).
\end{equation}
Thus, from the $L_\psi^*$ family we shall only consider the {\it inner minimal $L_\psi^*$ affine surface area} $\iSpsistar$.

%%%%%%%%%%%%%%%%%%%%%%%%%%

\subsection{Summary}

We summarize these cases in the following tables.  As we will see later, the functionals $\ISphi$ and $\OSphistar$ are finite for all $\varphi\in\fconcstargen$,   while  $\oSpsi$ and $\iSpsistar$ are finite for all $\psi\in\fconvstargen$. %(using Lemmas \ref{monolem}, \ref{existence}  and \ref{isoperlemphi}, and \eqref{duality}).
First, let  $\varphi\in\fconcstargen$ be such that $\lim_{t\to 0}\frac{\varphi(t)}{\sqrt{t}}=\infty$. Then we have:
\begin{center}
    \begin{tabular}{c|c|c|c|c|c}
       $\ISphi(K)$ & $\text{IS}_{\varphi*}(K)$  & $\iSphi(K)$ & $\OSphi(K)$ & $\OSphistar(K)$ & $\oSphi(K)$ \\ \hline
       $<\infty$ & $\infty$ & $0$ & $\infty$ & $<\infty$ & $0$
    \end{tabular}
\end{center}
Among these functionals, we shall only be interested in $\ISphi$ and $\OSphistar$ for $\varphi\in\fconcstargen$. Meanwhile, for  $\psi\in\fconvstargen$ we have:
\begin{center}
    \begin{tabular}{c|c|c|c|c}
     $\ISpsi(K)$ & $\iSpsi(K)$ & $\OSpsi(K)$ & $\oSpsi(K)$ & $\iSpsistar(K)$\\ \hline
        $\infty$ & $0$ & $\infty$ & $<\infty$ & $<\infty$
    \end{tabular}
\end{center}
 \noindent Among these functionals, we shall only be interested in  $\oSpsi$ and $\iSpsistar$. 

%%%%%%%%%%%%%%%%%%%%%%%

\subsection{Existence of extremal bodies}\label{excontsection}

In our first result, we show that the extremal general affine surface areas exist for all  $\varphi\in\fconcstargen$  and all  $\psi\in\fconvstargen$.  An analogous result was proved for the extremal $L_p$ affine surface areas in \cite[Lem. 3.2]{Giladietal}. 

\begin{lemma}\label{existence}
Let $K$ be a convex body in $\R^n$ with centroid at the origin, and let $\varphi\in\fconcstargen$ and $\psi\in\fconvstargen$. %Then:
\begin{itemize}
    \item[(i)] There exists  $K_{\max}^\varphi\in\Cin(K)$ such that $\ISphi(K)=\as_\varphi(K_{\max}^\varphi)$.
    
    \vspace{1mm}
    
    \item[(ii)] There exists $K_{\max}^{\varphi^*}\in\Cout(K)$ such that $\OSphistar(K)=\as_{\varphi^*}(K_{\max}^{\varphi^*})$.
    
        \vspace{1mm}

    \item[(iii)] There exists  $K_{\min}^\psi\in\Cout(K)$ such that $\oSpsi(K)=\as_\psi(K_{\min}^\psi)$.
    
        \vspace{1mm}

     \item[(iv)] There exists  $K_{\min}^{\psi^*}\in\Cin(K)$ such that $\iSpsistar(K)=\as_\psi(K_{\min}^{\psi^*})$.
\end{itemize}
\end{lemma}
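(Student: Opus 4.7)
The plan is to prove all four parts by a single scheme: extract a Hausdorff-convergent subsequence from an extremizing sequence using the Blaschke selection theorem, and then pass the extremum to the limit using the upper semicontinuity \eqref{uppersemi} of $\as_\varphi$ (parts (i), (ii)) or the lower semicontinuity \eqref{lowersemi} of $\as_\psi$ (parts (iii), (iv)).

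For part (i), I would begin with a maximizing sequence $\{K_j\}\subset\Cin(K)$ with $\as_\varphi(K_j)\to\ISphi(K)$. Uniform boundedness is automatic since $K_j\subset K$, so Blaschke selection yields a subsequential Hausdorff limit $K_0\subset K$. Combining Lemma \ref{isoperlemphi} with Lemma \ref{monolem} gives, for every $K'\in\Cin(K)$,
\[
\as_\varphi(K')\leq \as_\varphi(\vrad(K')B_n)\leq \as_\varphi(\vrad(K)B_n)<\infty,
\]
so $\ISphi(K)<\infty$, while $\ISphi(K)\geq \as_\varphi(rB_n)>0$ for any small $r>0$ with $rB_n\subset K$ gives positivity. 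One then has to verify that $K_0$ has nonempty interior and centroid at the origin: nondegeneracy of the limit is forced by upper semicontinuity (extended to degenerate limits as zero) together with positivity of $\ISphi(K)$, while continuity of the centroid on $\K$ with respect to the Hausdorff metric gives $g(K_0)=o$, so $K_0\in\Cin(K)$. Upper semicontinuity then yields
\[
\ISphi(K)=\lim_{j\to\infty}\as_\varphi(K_j)\leq \as_\varphi(K_0)\leq \ISphi(K),
\]
so $K_{\max}^\varphi:=K_0$ attains the supremum.

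Part (iii) follows the same scheme applied to a minimizing sequence $\{K_j\}\subset\Cout(K)$: Lemma \ref{isoperlem2} and \eqref{monotonepsi} give $\as_\psi(K_j)\geq\as_\psi(\vrad(K_j)B_n)$, whose right-hand side tends to $+\infty$ as $\vrad(K_j)\to\infty$ (since $\psi(t)\to\infty$ as $t\to 0$), so the volumes of $K_j$ are bounded; using $K\subset K_j$ and $g(K_j)=o$, together with the fact that long, thin bodies carry very large $\as_\psi$, rules out unbounded diameter, and then Blaschke selection combined with lower semicontinuity delivers the extremizer. Parts (ii) and (iv) are handled by the same arguments with $\varphi$ replaced by $\varphi^*\in\fconcstardualgen$ and with the roles of $\Cin$ and $\Cout$ reversed in each case; the dual monotonicity statements of Lemma \ref{monolem} and Lemma \ref{dualconc}, together with finiteness of $\OSphistar(K)$ and $\iSpsistar(K)$ coming from \eqref{duality} and \eqref{dualLpsi}, supply the required upper and lower bounds on the extremizing sequences.

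The main obstacle I expect is bounding the diameters of the extremizing sequences in parts (ii) and (iii): the volume and centroid constraints alone permit long, thin bodies, but these should be ruled out by observing that $\as_\psi$ and $\as_{\varphi^*}$ place large weight on boundary points where the curvature is small, so such bodies cannot appear in an extremizing sequence. The parallel difficulty in (i) and (iv) is ensuring the Hausdorff limit is genuinely $n$-dimensional, which follows from the isoperimetric/monotonicity pair of Lemmas \ref{isoperlemphi}, \ref{isoperlem2} and \ref{monolem} combined with the strict positivity of the extremal value.
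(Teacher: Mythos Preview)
Your overall scheme—extremizing sequence, Blaschke selection, semicontinuity—matches the paper's, and part (i) is essentially identical to what the paper does.

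For part (iii) you and the paper diverge on how to confine the minimizing sequence. You bound the volumes directly via Lemma~\ref{isoperlem2}: if $\vrad(K_j)\to\infty$ then $\as_\psi(K_j)\geq\as_\psi(\vrad(K_j)B_n)\to\infty$, contradicting finiteness of $\oSpsi(K)$. This is actually cleaner than the paper's route, which instead uses John's theorem together with Lemma~\ref{scalinglem}(iii) to \emph{replace} any large-volume $K'\in\Cout(K)$ by a scaled $\text{SL}(n)$ image $\tilde K=rA(K')\subset RnB_n$ still containing $K$ with $\as_\psi(\tilde K)\leq r^n\as_\psi(K')\leq\as_\psi(K')$. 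Your diameter step, however—that ``long, thin bodies carry very large $\as_\psi$''—is both vague and unnecessary. Once the volumes are bounded and each $K_j$ contains a fixed ball $\rho B_n\subset K$, the elementary cone estimate (which is exactly the paper's second case) already bounds the diameter: for $x\in K_j$ the cone $\conv(\{x\}\cup(x^\perp\cap\rho B_n))\subset K_j$ has volume at least $\tfrac{1}{n}|x|\rho^{n-1}|B_{n-1}|$, so $|x|$ is controlled.

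For parts (ii) and (iv) the paper does not run a parallel direct argument at all; it simply invokes the dualities $\OSphistar(K)=\ISphi(K^\circ)$ and $\iSpsistar(K)=\oSpsi(K^\circ)$ from \eqref{duality} and \eqref{dualLpsi} and reads off the extremizers from (i) and (iii). This neatly sidesteps the diameter obstacle you flag for (ii): a direct maximizing-sequence argument for $\OSphistar$ would need to rule out large $K_j\in\Cout(K)$, but Lemma~\ref{monolem} only says $\as_{\varphi^*}(\vrad(K_j)B_n)$ is \emph{decreasing} in $\vrad(K_j)$, not that it tends to zero, so neither the isoperimetric bound nor a ``long, thin bodies'' heuristic gives the required compactness. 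Using duality is the right shortcut here.
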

\noindent We postpone the proof until Section \ref{proofs}. 

%%%%%%%%%%%%%%%%%%%%%%%%

\subsection{Monotonicity and Continuity}

Although the $L_\varphi$ and $L_\psi$ affine surface areas are not monotone in general, the corresponding extremal affine surface areas are. If $K,L\in\K$ and $K\subset L$, %then $\ISphi(K) \leq \ISphi(L)$ and $\oSpsi(K)\geq\oSpsi(L)$. This follows immediately as
then $\Cin(K)\subset\Cin(L)$ implies
\begin{align}
\ISphi(K)&=\sup_{K^\prime\in\Cin(K)}\as_\varphi(K^\prime) \leq \sup_{L^\prime\in\Cin(L)}\as_\varphi(L^\prime) =\ISphi(L), \label{mono1}\\
\iSpsistar(K) &= \inf_{K^\prime\in\Cin(K)}\as^*_\psi(K^\prime) \geq \inf_{L^\prime\in\Cin(L)}\as^*_\psi(L^\prime)=\iSpsistar(L)\label{mono4},
\end{align}
while $\Cout(K)\supset\Cout(L)$ implies
\begin{align}
\OSphistar(K) &=\sup_{K^\prime\in\Cout(K)}\as_{\varphi^*}(K^\prime) \geq \sup_{L^\prime\in\Cout(L)}\as_{\varphi^*}(L^\prime)=\OSphistar(L) \label{mono3}\\
\oSpsi(K) &= \inf_{K^\prime\in\Cout(K)}\as_\psi(K^\prime) \leq \inf_{L^\prime\in\Cout(L)}\as_\psi(L^\prime)=\oSpsi(L)\label{mono2}.
\end{align}

The next result shows that the extremal general affine surface areas are continuous with respect to the Hausdorff metric (c.f. \cite[Prop. 3.3]{Giladietal}). This is not the case for the general $L_\varphi$ and $L_\psi$ affine surface areas, which are  zero or infinite, respectively, on any sequence of  polytopes that converges to a given convex body in the Hausdorff metric.

\begin{proposition}\label{contlem}
Consider the class of convex bodies in $\R^n$ with centroid at the origin, equipped with the Hausdorff metric. \begin{itemize}
    \item[(i)] For any  $\varphi\in\fconcstargen$, the functional $K\mapsto\ISphi(K)$ is continuous.
    \item[(ii)] For any  $\varphi^*\in\fconcstardualgen$, the functional $K\mapsto\OSphistar(K)$ is continuous.
    \item[(iii)] For any  $\psi\in\fconvstar$, the functional $K\mapsto\oSpsi(K)$ is continuous.
    \item[(iv)] For any  $\psi\in\fconvstar$, the functional $K\mapsto\iSpsistar(K)$ is continuous.
    \end{itemize}
\end{proposition}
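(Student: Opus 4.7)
My plan is to establish, in each of (i)--(iv), both upper and lower semicontinuity along an arbitrary Hausdorff-convergent sequence $K_j\to K$ of convex bodies with centroid at the origin. I will argue (i) in detail. Part (iii) is entirely parallel, with $\sup$/$\inf$ and $\Cin$/$\Cout$ swapped, the lower semicontinuity of $\as_\psi$ from (\ref{lowersemi}) replacing the upper semicontinuity of $\as_\varphi$, and Lemma \ref{isoperlem2} replacing Lemma \ref{isoperlemphi}; the degenerate-limit case discussed below does not arise in (iii), since every Hausdorff limit of a sequence in $\Cout(K_j)$ contains $K$ and is therefore non-degenerate. Parts (ii) and (iv) then follow from (i) and (iii) respectively by combining the polar-duality identities (\ref{duality}) and (\ref{dualLpsi}) with the Hausdorff continuity of the polarity map $K\mapsto K^\circ$ on $\K$.

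For upper semicontinuity of $\ISphi$, I would use Lemma \ref{existence}(i) to pick extremal bodies $K_j^*\in\Cin(K_j)$ realizing $\ISphi(K_j)$. Since $K_j^*\subset K_j$ and $K_j\to K$, the family $\{K_j^*\}$ lies in a common bounded set, so Blaschke selection produces a subsequence $K_{j_k}^*\to M$ for some compact convex $M$. If $\dim M=n$, then continuity of the centroid on non-degenerate convex bodies places $M\in\Cin(K)$, and (\ref{uppersemi}) gives $\limsup\as_\varphi(K_{j_k}^*)\leq\as_\varphi(M)\leq\ISphi(K)$. If $\dim M<n$, then $\vrad(K_{j_k}^*)\to 0$, and Lemma \ref{isoperlemphi} together with (\ref{asball}) give $\as_\varphi(K_{j_k}^*)\leq \varphi(t_k)/\sqrt{t_k}\cdot|\partial B_n|$ with $t_k=\vrad(K_{j_k}^*)^{-2n}\to\infty$; the defining property of $\fconcstargen$ forces $t\mapsto\varphi(t)/\sqrt{t}$ to decrease to some $L_\infty\in[0,\infty)$, and testing $\ISphi(K)$ against any small ball $rB_n\in\Cin(K)$ yields $\ISphi(K)\geq\as_\varphi(rB_n)\geq L_\infty|\partial B_n|$, which is the required upper bound on $\limsup_k \as_\varphi(K_{j_k}^*)$.

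For lower semicontinuity of $\ISphi$, I would apply Lemma \ref{existence}(i) again to fix an extremal $K^*\in\Cin(K)$. Because $o\in\mathrm{int}(K)$, for each $\lambda\in(0,1)$ the dilate $\lambda K^*$ lies in $\mathrm{int}(K)$ and has centroid at $o$; by Hausdorff convergence $\lambda K^*\in\Cin(K_j)$ for all $j$ large, so $\as_\varphi(\lambda K^*)\leq\liminf_j \ISphi(K_j)$. It then suffices to show $\as_\varphi(\lambda K^*)\to\as_\varphi(K^*)$ as $\lambda\to 1^-$. Using the standard change-of-variables $\as_\varphi(\lambda K^*)=\lambda^n\int_{\partial K^*}\varphi(\lambda^{-2n}F(y))\langle y,N_{\partial K^*}(y)\rangle\,d\mu_{\partial K^*}(y)$ with $F(y):=\kappa(K^*,y)/\langle y,N_{\partial K^*}(y)\rangle^{n+1}$, the integrand converges pointwise by continuity of $\varphi$, and the concavity bound $\varphi(\lambda^{-2n}t)\leq\lambda^{-2n}\varphi(t)$ (which follows from $\varphi(0)=0$ and the fact that $s\mapsto\varphi(s)/s$ is decreasing) furnishes the dominating function $2^n\varphi(F(y))\langle y,N_{\partial K^*}(y)\rangle$ on $\lambda\in[1/2,1]$, integrable because $\as_\varphi(K^*)=\ISphi(K)<\infty$ from the tables in Section~\ref{defsection}.

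I expect the main technical obstacle to be the degenerate case in upper semicontinuity: the Hausdorff limit $M$ of an extremal subsequence can collapse into a hyperplane, and it is precisely here that the restriction $\varphi\in\fconcstargen$ does essential work, through Lemma \ref{monolem}, by controlling $\as_\varphi$ on shrinking Euclidean balls. A secondary delicate point is the scaling continuity $\as_\varphi(\lambda K^*)\to\as_\varphi(K^*)$, since $\as_\varphi$ is only upper semicontinuous in general; here, however, the monotone control on the integrand under dilation reduces the matter to routine dominated convergence.
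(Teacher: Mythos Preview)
Your argument is correct, but the paper takes a considerably shorter and more elementary route. Rather than splitting into upper and lower semicontinuity and invoking Blaschke selection on a sequence of maximizers, the paper proves a two-sided sandwich directly from monotonicity \eqref{mono1} and the scaling inequality of Lemma~\ref{scalinglem}. Concretely: fix $\rho>0$ with $\rho B_n\subset K$; Hausdorff convergence gives $K_\ell\subset(1+\varepsilon/\rho)K$ and $K\subset(1+10\varepsilon/\rho)K_\ell$ for large $\ell$, whence monotonicity and Lemma~\ref{scalinglem}(ii) yield
\[
(1+\varepsilon/\rho)^{-n}\,\ISphi(K_\ell)\;\le\;\ISphi(K)\;\le\;(1+10\varepsilon/\rho)^{n}\,\ISphi(K_\ell),
\]
and continuity follows by letting $\varepsilon\to 0$. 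The remaining parts are then handled exactly as you do, via the duality identities \eqref{duality} and \eqref{dualLpsi}.

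What each approach buys: the paper's sandwich avoids Lemma~\ref{existence} entirely, never needs to worry about a degenerate Hausdorff limit of maximizers, and gives an explicit modulus of continuity; moreover, the scaling step uses only Lemma~\ref{scalinglem}, which holds for every $\varphi\in\fconc$, so the argument does not itself consume the hypothesis $\varphi\in\fconcstargen$ (that hypothesis is only needed to ensure $\ISphi(K)<\infty$). Your approach, by contrast, uses the $\fconcstargen$ condition in an essential way to dispose of the degenerate-limit case, and your dominated-convergence step for $\as_\varphi(\lambda K^*)\to\as_\varphi(K^*)$ is effectively a reproof of the relevant half of Lemma~\ref{scalinglem}. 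Both are valid; the paper's version is simply more economical. One minor omission in your upper-semicontinuity sketch: to bound $\limsup_j\ISphi(K_j)$ (rather than a limit along a single Blaschke subsequence), you should first pass to a subsequence realizing the $\limsup$ before applying Blaschke selection; this is standard but worth stating.
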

\vspace{1mm}

\noindent The proof of Proposition \ref{contlem} is also given in Section \ref{proofs}.
%Interestingly, unlike $\as_\varphi$ and $\as_\psi$, the extremal affine surface areas $\ISphi$ and $\oSpsi$   are positive and finite for polytopes in $\R^n$ (at least for the classes of functions $\fconcstargen$ and  $\fconcstardualgen$).

%%%%%%%%%%%%%%%%%%%%%%%%%%%%
\section{Lemmas}\label{lemmas}

The next lemma is elementary. We include the proof for the reader's convenience.
\begin{lemma}\label{ballLem}
 For all $\varphi\in\fconcstargen, \varphi^*\in\fconcstardualgen$ and   $\psi\in\fconvstargen$, we have: 
\begin{align}
    \ISphi(B_n) &= %\oSphi(B_n) 
     \as_\varphi(B_n)=\varphi(1)|\partial B_n|\label{ballcase}\\
     \OSphistar(B_n) &= \as_{\varphi^*}(B_n)=\varphi(1)|\partial B_n|\label{ballcase3}\\
    \oSpsi(B_n)&=%\oSpsi(B_n)
    \as_\psi(B_n)=\psi(1)|\partial B_n| \label{ballcase2}\\
       \iSpsistar(B_n)&=%\oSpsi(B_n)
    \as^*_\psi(B_n)=\psi(1)|\partial B_n|. \label{ballcase4}
\end{align}
\end{lemma}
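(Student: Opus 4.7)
The plan is to first verify the closed-form evaluations, and then establish each of the four extremal identities by a two-sided bound that exploits the fact that $B_n$ itself is an admissible candidate for both $\Cin(B_n)$ and $\Cout(B_n)$. The closed-form values are immediate: formula \eqref{asball} with $r=1$ gives $\as_\varphi(B_n)=\varphi(1)|\partial B_n|$, and since $\varphi^*(1)=1\cdot\varphi(1/1)=\varphi(1)$, applying \eqref{asball} to $\varphi^*$ yields $\as_{\varphi^*}(B_n)=\varphi(1)|\partial B_n|$. Likewise, \eqref{asball2} gives $\as_\psi(B_n)=\psi(1)|\partial B_n|$, and $\as_\psi^*(B_n)=\as_\psi(B_n^\circ)=\as_\psi(B_n)=\psi(1)|\partial B_n|$ because $B_n^\circ=B_n$.

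For $\ISphi(B_n)$, since $B_n\in\Cin(B_n)$ and $g(B_n)=o$, the supremum is at least $\as_\varphi(B_n)$. For the reverse inequality, any $K'\in\Cin(B_n)$ has $K'\subset B_n$, hence $\vrad(K')\leq 1$. Applying Lemma \ref{isoperlemphi} and then Lemma \ref{monolem} gives
\[
\as_\varphi(K')\leq \as_\varphi(\vrad(K')B_n)\leq \as_\varphi(B_n),
\]
which proves \eqref{ballcase}. The argument for $\OSphistar(B_n)$ is the mirror image: $B_n\in\Cout(B_n)$ gives the trivial lower bound, while for $K'\in\Cout(B_n)$ we have $\vrad(K')\geq 1$, so Lemma \ref{isoperlemphi} applied to $\varphi^*\in\fconc$ (which lies in $\fconcstardualgen$ by Lemma \ref{dualconc}) combined with the decreasing monotonicity of $\as_{\varphi^*}$ on Euclidean balls from Lemma \ref{monolem} yields $\as_{\varphi^*}(K')\leq \as_{\varphi^*}(\vrad(K')B_n)\leq \as_{\varphi^*}(B_n)$, giving \eqref{ballcase3}.

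For $\oSpsi(B_n)$, the body $B_n\in\Cout(B_n)$ gives the upper bound $\as_\psi(B_n)$. For the lower bound, any $K'\in\Cout(B_n)$ satisfies $\vrad(K')\geq 1$, so Lemma \ref{isoperlem2} and the monotonicity \eqref{monotonepsi} deliver
\[
\as_\psi(K')\geq \as_\psi(\vrad(K')B_n)\geq \as_\psi(B_n),
\]
establishing \eqref{ballcase2}. Finally, \eqref{ballcase4} follows immediately from the duality identity \eqref{dualLpsi}: $\iSpsistar(B_n)=\oSpsi(B_n^\circ)=\oSpsi(B_n)=\psi(1)|\partial B_n|$, using the third identity just proved.

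The only point requiring care is the bookkeeping of monotonicity directions: for $\varphi\in\fconcstargen$ the ball-functional $r\mapsto\as_\varphi(rB_n)$ is increasing, while for its dual $\varphi^*\in\fconcstardualgen$ it is decreasing, and the orientation must be paired correctly with whether $\vrad(K')$ is above or below $1$. Otherwise the proof is a direct chaining of the affine isoperimetric inequalities (Lemmas \ref{isoperlemphi}, \ref{isoperlem2}), the monotonicity results (Lemma \ref{monolem} and \eqref{monotonepsi}), and the duality \eqref{dualLpsi}; no new ideas are needed beyond what has already been assembled in Sections \ref{Backgroundsec}--\ref{defsection}.
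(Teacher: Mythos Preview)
Your proof is correct and follows essentially the same approach as the paper. The only minor difference is in \eqref{ballcase3}: the paper obtains $\OSphistar(B_n)=\ISphi(B_n)$ in one line from the duality relation \eqref{duality} and $B_n^\circ=B_n$, whereas you give a direct two-sided bound using Lemma~\ref{isoperlemphi} applied to $\varphi^*$ together with the decreasing part of Lemma~\ref{monolem}; both routes are equally valid and equally short.
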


\begin{proof}
On one hand, we have $\ISphi(B_n)=\sup_{K\in\Cin(B_n)}\as_\varphi(K) \geq \as_\varphi(B_n)$ 
since $B_n\in\Cin(B_n)$. On the other hand, by  Lemmas \ref{isoperlemphi} and   \ref{monolem} we derive
\begin{align*}
    \ISphi(B_n) =\sup_{K\in\Cin(B_n)}\as_\varphi(K)
   % =\sup_{K\in\Cin(B_n)}\as_\varphi(K+x_K)
    %&\leq\sup_{K\in\Cin(B_n)}\as_\varphi(\vrad(K+x_K)B_n)\\
    &\leq\sup_{K\in\Cin(B_n)}\as_\varphi(\vrad(K)B_n)
    = \as_\varphi(B_n).
   % =\varphi(1)|\partial B_n|.
\end{align*}
Thus $\ISphi(B_n)=\as_\varphi(B_n)$. By \eqref{duality}, we have 
    $\OSphistar(B_n) =\ISphi(B_n^\circ)=\ISphi(B_n)$ 
since $B_n^\circ=B_n$. The argument to prove $\oSpsi(B_n)=\as_\psi(B_n)$  is similar, and by \eqref{dualLpsi} we have  $\iSpsistar(B_n)=\as_\psi^*(B_n)$. 

The identity  $\as_\varphi(B_n)=\varphi(1)|\partial B_n|$ follows from \eqref{asball}, and  $\as_\psi(B_n)=\psi(1)|\partial B_n|$ follows from \eqref{asball2}. Finally, note that $\as_{\varphi}(B_n)=\as_{\varphi^*}(B_n)$ and $\as_\psi^*(B_n)=\as_\psi(B_n)$.
\end{proof}

Another ingredient we will need describes the effect a dilation has on the general affine surface area of a convex body.

\begin{lemma}\label{scalinglem}
Let $K\in\mathcal{K}_o^n$, $\varphi\in\fconc$ and $\psi\in\fconv$. Then for any $r\in[0,1]$ and any $R\geq 1$, the following inequalities hold:
\begin{itemize}
    \item[(i)] $\as_\varphi(rK) \geq r^n \as_\varphi(K)$
    
    \item[(ii)] $\as_\varphi(RK) \leq R^n\as_\varphi(K)$
    
    \item[(iii)] $\as_\psi(rK) \leq r^n\as_\psi(K)$
    
    \item[(iv)] $\as_\psi(RK) \geq R^n\as_\psi(K)$.
\end{itemize}
\end{lemma}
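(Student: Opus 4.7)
The plan is to first derive a clean scaling identity for the general affine surface area under a dilation $K\mapsto tK$, and then exploit monotonicity properties that are automatically forced by the definitions of $\fconc$ and $\fconv$. Fix $t>0$ and $K\in\mathcal{K}_o^n$. At a boundary point $y=tx\in\partial(tK)$ with $x\in\partial K$, the outer normal satisfies $N_{\partial(tK)}(y)=N_{\partial K}(x)$, the Gauss curvature transforms as $\kappa(tK,y)=t^{-(n-1)}\kappa(K,x)$, the support-type quantity scales as $\langle y,N_{\partial(tK)}(y)\rangle=t\langle x,N_{\partial K}(x)\rangle$, and $d\mu_{\partial(tK)}(y)=t^{n-1}\,d\mu_{\partial K}(x)$. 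Putting these four scalings into \eqref{genas1} (and likewise into \eqref{defaspsi}) and setting
\[
u(x)\,:=\,\frac{\kappa(K,x)}{\langle x,N_{\partial K}(x)\rangle^{n+1}},
\]
the powers of $t$ in the argument combine to give
\[
\as_\varphi(tK)=t^{n}\int_{\partial K}\varphi\!\left(t^{-2n}u(x)\right)\langle x,N_{\partial K}(x)\rangle\,d\mu_{\partial K}(x),
\]
and the analogous formula with $\psi$ in place of $\varphi$.

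Next I would observe that any $\varphi\in\fconc$ must be non-decreasing on $(0,\infty)$. Indeed, if $\varphi(a)>\varphi(b)$ for some $0<a<b$, then concavity forces $\varphi$ to decrease with slope at most $(\varphi(b)-\varphi(a))/(b-a)<0$ beyond $b$, which would push $\varphi(t)$ below $0$ for large $t$, contradicting $\varphi>0$. The dual fact for $\fconv$ is symmetric: any $\psi\in\fconv$ is non-increasing, since otherwise convexity together with $\lim_{t\to\infty}\psi(t)=0$ would be violated.

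With these two ingredients, all four inequalities follow uniformly. For (i), $r\in[0,1]$ yields $r^{-2n}u(x)\geq u(x)$, hence $\varphi(r^{-2n}u(x))\geq\varphi(u(x))$ pointwise, and the scaling identity gives $\as_\varphi(rK)\geq r^n\as_\varphi(K)$. For (ii), $R\geq 1$ reverses the inequality inside $\varphi$, giving $\as_\varphi(RK)\leq R^n\as_\varphi(K)$. For (iii) and (iv) the same argument is run with $\psi$ decreasing instead of $\varphi$ increasing, reversing the direction of each pointwise comparison and hence of the final inequality.

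The main obstacle is purely a bookkeeping one in the first step — tracking the correct powers of $t$ contributed by $\kappa$, $\langle x,N\rangle^{n+1}$, the linear factor $\langle x,N\rangle$, and the $(n-1)$-dimensional surface measure — so that the prefactor $t^n$ and the inner scaling $t^{-2n}$ emerge cleanly. Once the scaling identity is in hand, the monotonicity remarks for $\fconc$ and $\fconv$ reduce all four cases to inserting a pointwise inequality under the integral sign; no further use of concavity or convexity (beyond what guarantees monotonicity) is needed.
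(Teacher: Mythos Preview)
Your proof is correct and follows essentially the same route as the paper: both derive the scaling identity $\as_\varphi(tK)=t^n\int_{\partial K}\varphi(t^{-2n}u(x))\langle x,N_{\partial K}(x)\rangle\,d\mu_{\partial K}(x)$ via the standard transformation rules for $\kappa$, $\langle x,N\rangle$, and $d\mu_{\partial K}$, and then use monotonicity of $\varphi$ (resp.\ $\psi$) to compare the integrand pointwise with $\varphi(u(x))$ (resp.\ $\psi(u(x))$). The only difference is that the paper simply asserts ``$\psi$ is decreasing'' as a known fact, whereas you supply a short justification that membership in $\fconc$ forces $\varphi$ to be non-decreasing and membership in $\fconv$ forces $\psi$ to be non-increasing; this is a welcome addition but not a different method.
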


\begin{proof}
We shall prove part (iv);  the other cases (i)-(iii) proceed similarly. By the definition \eqref{defaspsi} of the $L_\psi$ affine surface area,
\begin{align*}
\as_\psi(rK)  &=\int_{\partial rK} \psi\left(\frac{\kappa(rK,x)}{\langle x,N_{\partial rK}(x)\rangle^{n+1}}\right)\langle x,N_{\partial rK}(x)\rangle\,d\mu_{\partial rK}(x)\\
&=\int_{\partial rK} \psi\left(\frac{\kappa(rK,ry)}{\langle ry,N_{\partial rK}(ry)\rangle^{n+1}}\right)\langle ry,N_{\partial rK}(ry)\rangle\,d\mu_{\partial rK}(ry)\\
&=r^n\int_{\partial K} \psi\left(\frac{\kappa(K,y)}{r^{2n}\langle y,N_{\partial K}(y)\rangle^{n+1}}\right)\langle y,N_{\partial K}(y)\rangle\,d\mu_{\partial K}(y)\\
&\leq r^n\int_{\partial K} \psi\left(\frac{\kappa(K,y)}{\langle y,N_{\partial K}(y)\rangle^{n+1}}\right)\langle y,N_{\partial K}(y)\rangle\,d\mu_{\partial K}(y)\\
&=r^n\as_\psi(K).
\end{align*}
In the  inequality we used the fact that $r\leq 1$ and $\psi$ is decreasing.
\end{proof}
%%%%%%%%%%%%%

\section{Affine isoperimetric inequalities}\label{isoperimetry}

Affine isoperimetric inequalities are among the most powerful tools in convex geometry. They relate two functionals on the class of convex bodies in $\R^n$, where the ratio of the functionals is invariant under nondegenerate linear  transformations. Prominent examples include the classical affine isoperimetric inequality \cite{Leichtweiss, Lutwak91, Hug96} and the Blaschke-Santal\'o inequality \cite{Blaschke1917, Petty1985, Santalo1949}. More recent examples include  affine isoperimetric inequalities for the $L_p$ affine surface area \cite{Lutwak96, MeyerWerner2000, WernerYe08}, the mixed $L_p$ affine surface area \cite{Lutwak96,WG2007,WernerYe2010}, the general affine surface area \cite{Ludwig2010, Deping2014Steiner}, the mixed general affine surface area \cite{Deping2012} and the extremal $L_p$ affine surface areas \cite{Giladietal}. In our next two results, we provide  affine isoperimetric inequalities for the extremal general affine surface areas.

\begin{proposition}\label{isoperimetric} 
Let $K$ be a convex body in $\R^n$ with centroid at the origin. If $\varphi\in\fconcstargen$, then
\begin{align}
\ISphi(K) &\leq  \vrad(K)^n\cdot\frac{\varphi(\vrad(K)^{-2n})}{\varphi(1)}\cdot \ISphi(B_n) \label{ISisop}\\
\OSphistar(K) &\leq \vrad(K^\circ)^{-n}\cdot\frac{\varphi^*(\vrad(K^\circ)^{2n})}{\varphi^*(1)}\cdot \OSphistar(B_n). \label{OSisop} 
\end{align}
Moreover, if $\varphi$ is strictly increasing, then equality holds in each case if and only if $K$ is an ellipsoid.
\end{proposition}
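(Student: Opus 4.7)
The plan is to reduce \eqref{ISisop} to the classical affine isoperimetric inequality of Lemma \ref{isoperlemphi} applied to an extremal inscribed body, and then to deduce \eqref{OSisop} from \eqref{ISisop} via the polar duality relation \eqref{duality}. The backbone of the argument is a three-step chain: sup attainment from Lemma \ref{existence}, the ball comparison of Lemma \ref{isoperlemphi}, and the radius monotonicity of Lemma \ref{monolem}.

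For \eqref{ISisop}, I would first rewrite the right-hand side. Combining \eqref{asball} with the identity $\ISphi(B_n)=\varphi(1)|\partial B_n|$ of \eqref{ballcase}, it simplifies to $\as_\varphi(\vrad(K)B_n)$. Next, by Lemma \ref{existence}(i) pick $K_{\max}^\varphi\in\Cin(K)$ with $\as_\varphi(K_{\max}^\varphi)=\ISphi(K)$. Since $g(K_{\max}^\varphi)=o$, Lemma \ref{isoperlemphi} yields $\as_\varphi(K_{\max}^\varphi)\leq\as_\varphi(\vrad(K_{\max}^\varphi)B_n)$. The inclusion $K_{\max}^\varphi\subset K$ forces $\vrad(K_{\max}^\varphi)\leq\vrad(K)$, and since $\varphi\in\fconcstargen$, Lemma \ref{monolem} gives $\as_\varphi(\vrad(K_{\max}^\varphi)B_n)\leq\as_\varphi(\vrad(K)B_n)$. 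Chaining these three bounds yields \eqref{ISisop}.

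For \eqref{OSisop}, I would apply the duality \eqref{duality} with $K^\circ$ in place of $K$ to get $\OSphistar(K)=\ISphi(K^\circ)$, then invoke \eqref{ISisop} for $K^\circ$. Rewriting the resulting bound using $\varphi^*(\vrad(K^\circ)^{2n})=\vrad(K^\circ)^{2n}\varphi(\vrad(K^\circ)^{-2n})$, $\varphi^*(1)=\varphi(1)$, and $\ISphi(B_n)=\OSphistar(B_n)$ (all via Lemma \ref{ballLem}) converts it into the stated form.

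For the equality characterization with $\varphi$ strictly increasing, equality in \eqref{ISisop} forces each inequality in the chain to be an equality. Equality in Lemma \ref{isoperlemphi} forces $K_{\max}^\varphi$ to be an ellipsoid, while the monotonicity step of Lemma \ref{monolem} is actually strict because $\varphi(t)/\sqrt{t}$ is strictly decreasing by the definition of $\fconcstargen$; hence equality there forces $\vrad(K_{\max}^\varphi)=\vrad(K)$. Combined with $K_{\max}^\varphi\subset K$, equal volumes then give $K_{\max}^\varphi=K$, so $K$ is an ellipsoid. Conversely, if $K$ is an ellipsoid centered at the origin, the $\text{SL}(n)$-invariance of $\as_\varphi$ produces $\as_\varphi(K)=\as_\varphi(\vrad(K)B_n)$, so taking $K_{\max}^\varphi=K$ achieves equality. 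For \eqref{OSisop}, equality transfers via duality to equality in \eqref{ISisop} for $K^\circ$, which is equivalent to $K^\circ$, and hence $K$, being an ellipsoid. The main delicate point is verifying the strictness of the monotonicity in Lemma \ref{monolem} from strict monotonicity of $\varphi(t)/\sqrt{t}$, which is precisely what the class $\fconcstargen$ is designed to encode.
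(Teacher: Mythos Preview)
Your proposal is correct and follows essentially the same route as the paper: apply Lemma \ref{isoperlemphi} to bodies in $\Cin(K)$, then use the radius monotonicity of Lemma \ref{monolem} to pass to $\vrad(K)B_n$, identify the right-hand side via \eqref{asball} and Lemma \ref{ballLem}, and deduce \eqref{OSisop} from \eqref{ISisop} by the duality \eqref{duality}. The only cosmetic difference is that you invoke Lemma \ref{existence}(i) to work with a specific maximizer $K_{\max}^\varphi$, whereas the paper bounds the supremum directly; your choice is harmless for the inequality and in fact makes the equality analysis (which the paper handles tersely) more transparent.
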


%For the $L_\psi$ affine surface areas, we derive the following
\begin{proposition}\label{isoperimetric2} 
Let $K$ be a convex body in $\R^n$ with centroid at the origin. If $\psi\in\fconvstargen$, then
\begin{align}
    \oSpsi(K)&\geq \vrad(K)^n\cdot\frac{\psi(\vrad(K)^{-2n})}{\psi(1)}\cdot \oSpsi(B_n)\label{osisop}\\
      \iSpsistar(K)&\geq \vrad(K^\circ)^n\cdot\frac{\psi(\vrad(K^\circ)^{-2n})}{\psi(1)}\cdot \iSpsistar(B_n)\label{isstarisop}.%\\
    %\OSpsi(K) &\geq \vrad(K)^{n(1-2\alpha)}\OSpsi(B_n).
\end{align}
Moreover, if $\psi$ is strictly decreasing, then equality holds in each case if and only if $K$ is an ellipsoid.
\end{proposition}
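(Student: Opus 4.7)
The plan is to imitate the structure of Proposition \ref{isoperimetric}, but with the inequalities reversed, using Lemma \ref{existence}(iii) to pick out an extremal outer body, Ludwig's affine isoperimetric inequality (Lemma \ref{isoperlem2}), and the monotonicity \eqref{monotonepsi} of $\as_\psi$ on Euclidean balls (which holds automatically because $\psi$ is decreasing, so no class condition like $\fconcstargen$ is needed here).

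For \eqref{osisop}, first invoke Lemma \ref{existence}(iii) to get $K_{\min}^\psi\in\Cout(K)$ with $\oSpsi(K)=\as_\psi(K_{\min}^\psi)$. Since $g(K_{\min}^\psi)=o$, Lemma \ref{isoperlem2} gives
\[
\as_\psi(K_{\min}^\psi)\;\geq\;\as_\psi(\vrad(K_{\min}^\psi)B_n).
\]
Next, since $K\subset K_{\min}^\psi$ we have $\vrad(K)\leq \vrad(K_{\min}^\psi)$, and \eqref{monotonepsi} yields $\as_\psi(\vrad(K_{\min}^\psi)B_n)\geq \as_\psi(\vrad(K)B_n)$. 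Chaining these and then using the explicit computation \eqref{asball2} together with $\oSpsi(B_n)=\psi(1)|\partial B_n|$ (Lemma \ref{ballLem}) rewrites the right-hand side into exactly $\vrad(K)^n\,\psi(\vrad(K)^{-2n})/\psi(1)\cdot \oSpsi(B_n)$, giving \eqref{osisop}. For \eqref{isstarisop}, I would apply \eqref{dualLpsi} to write $\iSpsistar(K)=\oSpsi(K^\circ)$, plug $K^\circ$ into the inequality just established, and finish using $\iSpsistar(B_n)=\oSpsi(B_n)$ from Lemma \ref{ballLem}.

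For the equality characterization, assume $\psi$ is strictly decreasing. If $K$ is an ellipsoid (with centroid at $o$), then by $\text{SL}(n)$-invariance and Lemma \ref{ballLem}, $\as_\psi(K)=\as_\psi(\vrad(K)B_n)$, while also $K\in\Cout(K)$ gives $\oSpsi(K)\leq\as_\psi(K)$; combined with the lower bound just proved, equality holds. Conversely, if equality holds in \eqref{osisop}, then equality in Lemma \ref{isoperlem2} applied to $K_{\min}^\psi$ forces $K_{\min}^\psi$ to be an ellipsoid, and strict monotonicity of $r\mapsto\as_\psi(rB_n)$ (which follows from $\psi$ strictly decreasing via \eqref{asball2}) forces $\vrad(K_{\min}^\psi)=\vrad(K)$; combined with $K\subset K_{\min}^\psi$ this yields $K=K_{\min}^\psi$, so $K$ itself is an ellipsoid. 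The equality case for \eqref{isstarisop} then follows from the duality, since $K$ is an ellipsoid iff $K^\circ$ is.

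The proof is mostly assembling already-proved ingredients, so there is no deep obstacle; the only delicate point is the equality analysis, where one must remember to use (i) strict monotonicity of $\as_\psi$ on balls to promote the radius inequality to an equality of radii, and (ii) the fact that a convex body contained in another with the same volume must coincide with it, in order to conclude $K=K_{\min}^\psi$.
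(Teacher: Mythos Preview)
Your proof is correct and follows essentially the same approach as the paper: apply Ludwig's inequality (Lemma~\ref{isoperlem2}) together with the monotonicity \eqref{monotonepsi} to bound $\oSpsi(K)$ below by $\as_\psi(\vrad(K)B_n)$, then rewrite via \eqref{asball2} and Lemma~\ref{ballLem}, and deduce \eqref{isstarisop} from \eqref{dualLpsi}. The only cosmetic difference is that you first invoke Lemma~\ref{existence}(iii) to select an extremizer $K_{\min}^\psi$ and argue on that single body, whereas the paper works directly with the infimum over $\Cout(K)$; your equality analysis is also spelled out in more detail than the paper's terse appeal to the equality cases of Lemma~\ref{isoperlem2}.
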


\subsection{Proof of Propositions  \ref{isoperimetric} and \ref{isoperimetric2}}
By Lemmas \ref{isoperlemphi}, \ref{monolem} and \ref{ballLem} we obtain % and  \ref{monolem} we obtain
\begin{align*}
    \ISphi(K) = \sup_{K^\prime\in\Cin(K)}\as_\varphi(K^\prime)
  %  &= \sup_{K^\prime\in\Cin(K)}\as_\varphi(K^\prime+x_{K^\prime})\\
   % &\leq \sup_{K^\prime\in\Cin(K)}\as_\varphi(\vrad(K^\prime+x_{K^\prime})B_n)\\
    &\leq \sup_{K^\prime\in\Cin(K)}\as_\varphi(\vrad(K^\prime)B_n)\\
    &= \as_\varphi(\vrad(K)B_n)\\
    &=\vrad(K)^n\cdot\frac{\varphi(\vrad(K)^{-2n})}{\varphi(1)}\cdot\varphi(1)|\partial B_n|\\
    &=\vrad(K)^n\cdot\frac{\varphi(\vrad(K)^{-2n})}{\varphi(1)}\cdot\ISphi(B_n).
\end{align*}
Next, we  use \eqref{ISisop}, \eqref{duality} and $\varphi^*(t)=t\varphi(1/t)$ to get
\begin{align*}
\OSphistar(K)=\ISphi(K^\circ) &\leq \vrad(K^\circ)^n\cdot\frac{\varphi(\vrad(K^\circ)^{-2n})}{\varphi(1)}\cdot \OSphistar(B_n)\\
&=\vrad(K^\circ)^{-n}\cdot\frac{\varphi^*(\vrad(K^\circ)^{2n})}{\varphi^*(1)}\cdot \OSphistar(B_n).
\end{align*}

Similarly, using  Lemma \ref{isoperlem2}, Lemma \ref{ballLem} and \eqref{monotonepsi} we derive  % and the fact that $0<r\leq s$ implies $\as_\psi(rB_n)\leq\as_\psi(sB_n)$, %$\psi$ is decreasing,
\begin{align*}
    \oSpsi(K) = \inf_{K^\prime\in\Cout(K)}\as_\psi(K^\prime)
   % & = \inf_{K^\prime\in\Cout(K)}\as_\psi(K^\prime+x_{K^\prime})\\
    %&\geq \inf_{K^\prime\in\Cout(K)}\as_\psi(\vrad(K^\prime+x_{K^\prime})B_n)
    %\\
    &\geq\as_\psi(\vrad(K)B_n)\\
    &=\vrad(K)^n\cdot\frac{\psi(\vrad(K)^{-2n})}{\psi(1)}\cdot\oSpsi(B_n).
\end{align*}
From \eqref{osisop} and \eqref{dualLpsi} we get \eqref{isstarisop}. The equality conditions in each case follow from those in Lemmas \ref{isoperlemphi} and \ref{isoperlem2}. 
\qed

\begin{remark}
\normalfont Choosing $\varphi(t)=t^{\frac{p}{n+p}}$ with $p\in[0,n]$ or $p\in[n,\infty]$ in Proposition \ref{isoperimetric}, or $\psi(t)=t^{\frac{p}{n+p}}$ with $p\in(-n,0]$ in  Proposition \ref{isoperimetric2}, we recover the affine isoperimetric inequalities of  Giladi, Huang, Sch\"utt and Werner  \cite[Prop. 3.4]{Giladietal} for the extremal $L_p$ affine surface areas.
\end{remark}
%%%%%%%%%%%%%%%
\subsection{Blaschke-Santal\'o type  inequalities}\label{BSsect}

The celebrated Blaschke-Santal\'o inequality states that for any convex body $K$ in $\R^n$ with centroid at the origin,
\begin{equation*}
    |K|\cdot|K^\circ| \leq |B_n|^2
\end{equation*}
with equality if and only if $K$ is an  ellipsoid. For $n=2,3$, this result is due to Blaschke \cite{Blaschke1917}, and it was  extended to all  $n$ by Santal\'o \cite{Santalo1949}. The equality conditions were later proved by Petty \cite{Petty1985}. In particular, Blaschke-Santal\'o type inequalities have also been shown for the $L_p$ affine surface area and the mixed $L_p$ affine surface area (see \cite{Lutwak96, WG2007,  WernerYe08, WernerYe2010, Deping2012} and the references therein), as well as to the general affine surface area \cite{Ludwig2010} and mixed general affine surface area \cite{Deping2012}. In our main result, we prove an analogue for the extremal general affine surface areas. 

\begin{theorem}\label{BSvarphi}
Let $K$ be a convex body in $\R^n$ with centroid at the origin, and let  $\varphi\in\fconcstargen$ satisfy the submultiplicativity condition $\varphi(t)\varphi(1/t)\leq\varphi(1)^2$ for all $t>0$. Then
\begin{align}
    \ISphi(K)\ISphi(K^\circ)
    &\leq \ISphi(B_n)^2\label{BS1}\\ %\frac{\varphi(\vrad(K)^{-2n})\varphi(\vrad(K^\circ)^{-2n})}{\varphi(1)^2}\cdot\ISphi(B_n)^2\\
    \OSphistar(K)\OSphistar(K^\circ) &\leq \OSphistar(B_n)^2\label{BS2}. %\frac{\varphi(\vrad(K)^{-2n})\varphi(\vrad(K^\circ)^{-2n})}{\varphi(1)^2}\cdot\OSphistar(B_n)^2.
\end{align}
Moreover, if $\varphi$ is strictly increasing, then equality holds in each case if and only if $K$ is an ellipsoid.
\end{theorem}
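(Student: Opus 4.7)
The strategy is to apply the $L_\varphi$ affine isoperimetric inequality (Proposition \ref{isoperimetric}) to both $K$ and $K^\circ$, combine the two estimates via the classical Blaschke-Santal\'o inequality $\vrad(K)\vrad(K^\circ)\leq 1$, and use the submultiplicativity hypothesis on $\varphi$ to absorb the resulting $\varphi$-factor. The second inequality \eqref{BS2} should then drop out of the first via the polar-duality identity \eqref{duality}.

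Writing $r=\vrad(K)$, $s=\vrad(K^\circ)$ and applying Proposition \ref{isoperimetric} twice, I get
\[
\ISphi(K)\ISphi(K^\circ)\leq (rs)^n\cdot\frac{\varphi(r^{-2n})\,\varphi(s^{-2n})}{\varphi(1)^2}\cdot\ISphi(B_n)^2,
\]
and it remains to show that the two prefactors multiply to at most $1$. Setting $a=r^{-2n}$, $b=s^{-2n}$, Blaschke-Santal\'o gives $ab\geq 1$, hence $b\geq 1/a$. Since $\varphi\in\fconcstargen$ means $t\mapsto\varphi(t)/\sqrt{t}$ is strictly decreasing, from $b\geq 1/a$ I would deduce $\varphi(b)\leq\sqrt{ab}\,\varphi(1/a)$. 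Coupling this with submultiplicativity $\varphi(a)\varphi(1/a)\leq\varphi(1)^2$ produces
\[
\varphi(a)\varphi(b)\leq\sqrt{ab}\,\varphi(1)^2.
\]
Since $(rs)^n=(ab)^{-1/2}$, the $\sqrt{ab}$ cancels precisely, yielding \eqref{BS1}.

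For \eqref{BS2}, I would use the identity $\OSphistar(\cdot)=\ISphi((\cdot)^\circ)$ from \eqref{duality}, the bipolar theorem, and the equality $\OSphistar(B_n)=\ISphi(B_n)$ from Lemma \ref{ballLem} to reduce \eqref{BS2} immediately to \eqref{BS1}. For the equality analysis, necessity is straightforward: under strictly increasing $\varphi$, Proposition \ref{isoperimetric} is sharp only on ellipsoids, and Blaschke-Santal\'o is also sharp only on ellipsoids. The main obstacle is the sufficiency direction. A direct computation using $SL(n)$-invariance of $\as_\varphi$ together with the proof of Lemma \ref{ballLem} shows that for an ellipsoid $K$ with $\vrad(K)=r$,
\[
\ISphi(K)\ISphi(K^\circ)=\varphi(r^{-2n})\,\varphi(r^{2n})\,|\partial B_n|^2,
\]
and matching this with $\ISphi(B_n)^2=\varphi(1)^2|\partial B_n|^2$ requires the submultiplicativity to be \emph{tight} at $t=r^{-2n}$. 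This tightness is automatic when $\varphi$ is homogeneous (as for $\varphi(t)=t^{p/(n+p)}$), but for a generic member of $\fconcstargen$ it does not come for free, so the sufficiency portion of the equality statement is the delicate point I expect the author's argument to address, either by an implicit tightness convention or by restricting attention to the necessity direction.
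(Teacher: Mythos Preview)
Your derivation of the inequalities \eqref{BS1} and \eqref{BS2} is exactly the paper's: apply Proposition \ref{isoperimetric} to $K$ and to $K^\circ$, reduce the resulting prefactor to the claim that $\frac{\varphi(a)}{\sqrt a}\cdot\frac{\varphi(b)}{\sqrt b}\leq\varphi(1)^2$ whenever $ab\geq 1$, and deduce this from the strict monotonicity of $t\mapsto\varphi(t)/\sqrt t$ together with the submultiplicativity hypothesis; then obtain \eqref{BS2} from \eqref{BS1} via \eqref{duality} and Lemma \ref{ballLem}. There is no substantive difference in the argument.

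Your hesitation about the sufficiency direction of the equality statement is warranted, and the paper does \emph{not} resolve it. The paper's entire treatment of equality is the single sentence ``The equality conditions follow from those in Proposition \ref{isoperimetric}, since they are stronger than those of the Blaschke-Santal\'o inequality,'' which yields only the necessity direction (equality in \eqref{BS1} forces equality in Proposition \ref{isoperimetric}, hence $K$ is an ellipsoid). Your computation is correct: for an ellipsoid $K$ with $\vrad(K)=r$ one has
\[
\ISphi(K)\ISphi(K^\circ)=\varphi(r^{-2n})\,\varphi(r^{2n})\,|\partial B_n|^2,
\]
which equals $\ISphi(B_n)^2=\varphi(1)^2|\partial B_n|^2$ precisely when submultiplicativity is tight at $t=r^{-2n}$. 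For the paper's own example $\varphi_m(t)=\arctan(t^{1/m})$ in Remark \ref{fnexample2}, the analysis there shows $\varphi_m(t)\varphi_m(1/t)<\varphi_m(1)^2$ for every $t\neq 1$, so an ellipsoid with $\vrad(K)\neq 1$ gives strict inequality in \eqref{BS1}. Thus the ``if and only if'' as stated is too strong; the paper's argument supports only the ``only if'' direction (and sufficiency holds in the homogeneous case $\varphi(t)\varphi(1/t)\equiv\varphi(1)^2$, which covers the $L_p$ specializations). You were right to flag this rather than paper over it.
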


\begin{proof}
Applying  Proposition \ref{isoperimetric} to $K$ and $K^\circ$, we derive
\begin{align}\label{submultBS}
    \ISphi(K)\ISphi(K^\circ)
    &\leq  \frac{\varphi(\vrad(K)^{-2n})}{\vrad(K)^{-n}}\cdot\frac{\varphi(\vrad(K^\circ)^{-2n})}{\vrad(K^\circ)^{-n}}\cdot\varphi(1)^{-2}\cdot\ISphi(B_n)^2.
\end{align}
It suffices to show that for all $s,t>0$ with $st\geq 1$, 
\begin{equation}\label{submulteqn}
    \frac{\varphi(s)}{\sqrt{s}}\cdot\frac{\varphi(t)}{\sqrt{t}}\leq \varphi(1)^2,
\end{equation}
for then the conclusion follows with $s=\vrad(K)^{-2n}$ and $t=\vrad(K^\circ)^{-2n}$, where  $st\geq 1$ holds by the Blaschke-Santal\'o inequality. The function $t\mapsto \varphi(t)/\sqrt{t}$ is decreasing since $\varphi\in\fconcstargen$, so for all $s,t>0$ with $s\geq 1/t$, %to prove \eqref{submulteqn} holds for all $s,t>0$ with $t\geq 1/s$ it suffices to the inequality holds for all $s>0$ when $s=1/t$; if $s>1/t$ then 
\[
\frac{\varphi(s)}{\sqrt{s}}\cdot\frac{\varphi(t)}{\sqrt{t}}\leq \frac{\varphi(1/t)}{\sqrt{1/t}}\cdot\frac{\varphi(t)}{\sqrt{t}}=\varphi(t)\varphi(1/t).
\]
By hypothesis, this is less  than or equal to  $\varphi(1)^2$ for all $t>0$, which shows \eqref{submulteqn}. Hence, inequality \eqref{BS1} follows from \eqref{submultBS}.

 The identity $ \ISphi(K)\ISphi(K^\circ)=\OSphistar(K)\OSphistar(K^\circ)$ follows from \eqref{duality}, and $\ISphi(B_n)=\OSphistar(B_n)$ by Lemma \ref{ballLem}. Thus, \eqref{BS2} follows from \eqref{BS1}. The equality conditions follow from those in Proposition \ref{isoperimetric}, since they are stronger than those of the Blaschke-Santal\'o inequality.
\end{proof}

\begin{remark}\label{fnexample2}
\normalfont For any $m\geq 2$, the function $\varphi_m(t)=\arctan(t^{1/m})$ lies in $\fconcstargen$ by Remark \ref{fnexample1}. We show that it satisfies the submultiplicativity condition $\varphi_m(t)\varphi_m(1/t)\leq \varphi_m(1)^2$ for all $t>0$. This is equivalent to 
\begin{equation}\label{toshow}
    \arctan(t^{1/m})\arctan(t^{-1/m})\leq \frac{\pi^2}{16}, \quad \forall t>0.
\end{equation}
We use the identity $\arctan(1/x)=\pi/2-\arctan(x)$ for all $x>0$ to derive
\[
\varphi_m(t)\varphi_m(1/t)=\varphi_m(t)\left(\frac{\pi}{2}-\varphi_m(t)\right)=-\varphi_m(t)^2+\frac{\pi}{2}\cdot\varphi_m(t).
\]
Set $s=\varphi_m(t)$. The maximum value of the function $s\mapsto -s^2+\frac{\pi}{2}s$ is achieved when $s=\pi/4$, which implies $t=\tan(\pi/4)=1$. Therefore,
\[
\varphi_m(t)\varphi_m(1/t)\leq -\varphi_m(1)^2+\frac{\pi}{2}\varphi_m(1)=\frac{\pi^2}{16},
\]
which proves \eqref{toshow}. Thus, for $m\geq 2$ the function $\varphi_m$ satisfies the hypotheses of Theorem \ref{BSvarphi}.
\end{remark}

%\begin{remark}
%For $K\in\K$ with centroid at the origin, the Blaschke-Santal\'o inequality is equivalent to $\vrad(K^\circ)^{-2n}\geq \vrad(K)^{2n}$. Thus by the definition of $\fconcstar$, we can write \eqref{OSisop} as
%\[
%\OSphistar(K) \leq \vrad(K)^{-n}\cdot\frac{\varphi(\vrad(K)^{2n})}{\varphi(1)}\cdot\ISphi(B_n).
%\]
%\end{remark}

%%%%%%%%%%%%%%%%%%%%%%%%%%%%%%%

\subsection{Inverse Santal\'o type inequalities}\label{invsantsec}

Bourgain and Milman's inverse Santal\'o inequality \cite{BourgainMilman} (see also \cite{Kuperberg08, MilmanPajor2000, Nazarov2012}) states that there exists an absolute constant $c>0$ such that for all $n\geq 1$ and every  convex body $K$ in $\R^n$, %with the origin in its interior, 
\begin{equation}\label{reverseBS}
    |K|\cdot |K^\circ| \geq c^n |B_n|^2.
\end{equation}
The best known constant is $c=1/2$, which is due to Kuperberg \cite{Kuperberg08}. 

\vspace{2mm}

Our next result gives inverse Santal\'o type inequalities for the outer minimal $L_\psi$ and inner minimal $L_\psi^*$ affine surface areas. %Applying \eqref{reverseBS} and Lemma \ref{isoperimetric2}, we derive the following analogue for the extremal general affine surface areas.

\begin{theorem}\label{reverseBSospsi}
Let $K$ be a convex body in $\R^n$ with centroid at the origin. If $\psi\in\fconvstargen$, then there exists a positive absolute constant $c$ such that 
\begin{align}
\oSpsi(K)\oSpsi(K^\circ) &\geq c^n\cdot\frac{\psi(\vrad(K)^{-2n})\psi(\vrad(K^\circ)^{-2n})}{\psi(1)^2}\cdot\oSpsi(B_n)^2\label{reverseBS1}\\
\iSpsistar(K)\iSpsistar(K^\circ) &\geq c^n\cdot\frac{\psi(\vrad(K)^{-2n})\psi(\vrad(K^\circ)^{-2n})}{\psi(1)^2}\cdot\iSpsistar(B_n)^2.\label{reverseBS2}
\end{align}
%Equality holds if and only if *****
\end{theorem}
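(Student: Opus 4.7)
The plan is to derive both inequalities by combining the affine isoperimetric inequalities of Proposition \ref{isoperimetric2} applied in parallel to $K$ and its polar $K^\circ$ with the Bourgain–Milman inverse Santal\'o inequality \eqref{reverseBS}. Since the right-hand sides of \eqref{reverseBS1} and \eqref{reverseBS2} both factor as $c^n$ times the product of the isoperimetric bounds, this is the natural route.

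First, for \eqref{reverseBS1}, I apply \eqref{osisop} to $K$ and then to $K^\circ$; since $\vrad((K^\circ)^\circ) = \vrad(K)$ by the bipolar theorem, the second application produces $\vrad(K^\circ)^n \, \psi(\vrad(K^\circ)^{-2n})/\psi(1) \cdot \oSpsi(B_n)$. Multiplying the two lower bounds yields
\[
\oSpsi(K)\oSpsi(K^\circ) \;\geq\; \vrad(K)^n\vrad(K^\circ)^n \cdot \frac{\psi(\vrad(K)^{-2n})\,\psi(\vrad(K^\circ)^{-2n})}{\psi(1)^2}\cdot \oSpsi(B_n)^2.
\]
Then I use $\vrad(K)^n\vrad(K^\circ)^n = |K|\,|K^\circ|/|B_n|^2$, together with \eqref{reverseBS}, to replace the volume-radius product by $c^n$; this delivers \eqref{reverseBS1}.

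For \eqref{reverseBS2}, the argument is completely analogous, but one has to be careful about which volume radius appears in which place because of the extra polar duality hidden in $\iSpsistar$. Applying \eqref{isstarisop} to $K$ gives a factor of $\vrad(K^\circ)^n \psi(\vrad(K^\circ)^{-2n})/\psi(1)$, and applying it to $K^\circ$ (again using the bipolar theorem) gives the factor $\vrad(K)^n \psi(\vrad(K)^{-2n})/\psi(1)$. Multiplying the two inequalities produces the same product $\vrad(K)^n\vrad(K^\circ)^n$ as before, so one more invocation of Bourgain–Milman yields \eqref{reverseBS2}.

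There is no genuine obstacle: both ingredients are in hand, and the only item requiring any care is confirming that the bipolar identification swaps the roles of $K$ and $K^\circ$ in exactly the right way so that the factor $\vrad(K)^n\vrad(K^\circ)^n$ emerges cleanly and can be lower-bounded by Bourgain–Milman. The constant $c$ in the conclusion is the absolute constant appearing in \eqref{reverseBS} (for instance $c = 1/2$ by Kuperberg's bound).
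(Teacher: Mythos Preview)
Your proposal is correct and follows essentially the same approach as the paper: apply Proposition \ref{isoperimetric2} to $K$ and $K^\circ$, multiply, and invoke the Bourgain--Milman inequality \eqref{reverseBS} to bound $(\vrad(K)\vrad(K^\circ))^n$ below by $c^n$. The only cosmetic difference is that for \eqref{reverseBS2} the paper cites the duality relation $\iSpsistar(K)=\oSpsi(K^\circ)$ from \eqref{dualLpsi} (together with \eqref{ballcase4}) to read it off directly from \eqref{reverseBS1}, whereas you rederive it by applying \eqref{isstarisop} to $K$ and $K^\circ$; since \eqref{isstarisop} was itself obtained from \eqref{osisop} via that same duality, the two arguments are equivalent.
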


\begin{proof}
By   Proposition \ref{isoperimetric2} and the inverse Santal\'o inequality \eqref{reverseBS},
\begin{align*}
    \oSpsi(K)\oSpsi(K^\circ)&%=\inf_{K^\prime\in\Cout(K)}\as_\psi(K^\prime)\cdot\inf_{K^{\prime\prime}\in\Cout(K^\circ)}\as_\psi(K^{\prime\prime})\\
     %&\geq \inf_{K^\prime\in\Cout(K)}\as_\psi(\vrad(K^\prime)B_n)\cdot\inf_{K^{\prime\prime}\in\Cout(K^\circ)}\as_\psi(\vrad(K^{\prime\prime})B_n)\\
      %&= \inf_{K^\prime\in\Cout(K)}\vrad(K^\prime)^n\psi(\vrad(K^\prime)^{-2n})|\partial B_n|\\
      %&\times\inf_{K^{\prime\prime}\in\Cout(K^\circ)}\vrad(K^{\prime\prime})^n\psi(\vrad(K^{\prime\prime})^{-2n})|\partial B_n|\\
      \geq (\vrad(K)\vrad(K^\circ))^n\psi(\vrad(K)^{-2n})\psi(\vrad(K^{\circ})^{-2n})\oSpsi(B_n)^2\\
      &\geq c^n\cdot\frac{\psi(\vrad(K)^{-2n})\psi(\vrad(K^{\circ})^{-2n})}{\psi(1)^2}\cdot\oSpsi(B_n)^2.
\end{align*}
Inequality  \eqref{reverseBS2} now follows from \eqref{dualLpsi} and \eqref{ballcase4}. 
\end{proof}

%\begin{proof}
%Using  Lemma \ref{isoperlem} and the inverse Santal\'o inequality \eqref{reverseBS}, we derive
%\begin{align*}
 %   \oSpsi(K)\oSpsi(K^\circ) &= \inf_{K^\prime\in\Cout(K)}\as_\psi(K^\prime)\cdot\inf_{K^{\prime\prime}\in\Cout(K^\circ)}\as_\psi(K^{\prime\prime})\\
  %  &\geq \inf_{K^\prime\in\Cout(K)}\as_\psi(\vrad(K^\prime)B_n)\cdot \inf_{K^{\prime\prime}\in\Cout(K^\circ)}\as_\psi(\vrad(K^{\prime\prime})B_n)\\
   %&=\as_\psi(B_n)^2\inf_{K^\prime\in\Cout}\vrad(K^\prime)^n\psi(\vrad(K^\prime)^{-2n})\\
   %&\times\inf_{K^{\prime\prime}\in\Cout(K^\circ)}\vrad(K^{\prime\prime})^n\psi(\vrad(K^{\prime\prime})^{-2n})\\
    %&\geq c^n\psi(\vrad(K)^{-2n})\psi(\vrad(K^\circ)^{-2n})\as_psi(B_n)^\\
     %&= c^n\cdot\frac{\psi(\vrad(K)^{-2n})\psi(\vrad(K^\circ)^{-2n})}{\psi(1)^2}\cdot\oSpsi(B_n)^2.
%\end{align*}
%Inequality \eqref{reverseBS2} now follows from \eqref{reverseBS1} and Lemma \ref{ballLem}.
%\end{proof}

%%%%%%%%%%%%%%%%%%%%%%%%%%%%%

%%%%%%%%%%%%%%%%%%
\subsection{Extremal $L_p$ affine surface areas}\label{Lpsection}

Let $K$ be a convex body in $\R^n$ with centroid at the origin. For any real number $p\neq -n$,  the $L_p$ {\it affine surface area} $\as_p(K)$ of $K$ is defined by
\begin{equation}\label{asp}
\as_p(K)=\int_{\partial K}\frac{\kappa(K,x)^{\frac{p}{n+p}}}{\langle x, N_{K}(x)\rangle^{\frac{n(p-1)}{n+p}}}\,d\mu_{\partial K}(x)
\end{equation}
and \cite{Lutwak96,SW2004}
\begin{equation}\label{asinfty}
    \as_{\pm\infty}(K)=\int_{\partial K}\frac{\kappa(K,x)}{\langle x,N_K(x)\rangle^n}\,d\mu_{\partial K}(x),
\end{equation}
provided the integrals exist. These definitions were given  in \cite{Lutwak96} for $p>1$ and \cite{SW2004} for $p<1$. The case $p=1$ gives the classical affine surface $\as_1$ from affine differential geometry, originally due to Blaschke  \cite{Blaschke1923} for sufficiently smooth convex bodies. The definition of $\as_1$ was extended to all convex bodies in $\R^n$ by several authors  \cite{Leichtweiss1986, Lutwak91, MeyerWerner2000, Schmuck1992, SW1990, Werner1994}, with Sch\"utt and Werner \cite{SW1990} showing specifically that the definition $\as_1(K)=\int_{\partial K}\kappa(K,x)^{\frac{1}{n+1}}\,d\mu_{\partial K}(x)$  extends naturally to all convex bodies in $\R^n$. %Moreover, if $p=0$ then
%\[
%\as_0(K)=\int_{\partial K}\langle x,N_K(x)\rangle \,d\mu_{\partial K}(x)=n|K|,
%\]
%and if $p=n$ then
%\[
%\as_n(K)=
%\]

The $L_p$ affine surface area is homogeneous of degree $\frac{n(n-p)}{n+p}$ (see \cite[Prop. 9]{SW2004}), meaning
\[
\as_p(\lambda K)=\lambda^{\frac{n(n-p)}{n+p}}\as_p(B_n),\quad \forall \lambda>0.
\]
The following $L_p$ affine isoperimetric inequalities are due to Lutwak \cite{Lutwak96} for $p\geq 1$ and to Werner and Ye \cite[Thm. 4.2]{WernerYe08} for  $p<1$. For any convex body $K$ in $\R^n$ with centroid at the origin and all $p\geq 0$,
\begin{equation}\label{Lp1}
    \as_p(K)\leq \vrad(K)^{\frac{n(n-p)}{n+p}}\as_p(B_n)
\end{equation}
while for $-n<p\leq 0$ the inequality reverses,
\begin{equation}\label{Lp2}
    \as_p(K) \geq \vrad(K)^{\frac{n(n-p)}{n+p}}\as_p(B_n).
\end{equation}
Equality holds in each case if and only if $K$ is an ellipsoid, and it holds trivially if $p=0$. For $p=\pm\infty$, it follows from  \cite[Thm. 7.7]{Lutwak96} and  \cite[pp. 114--115]{SW2004} that for any convex body $K$ in $\R^n$, 
\begin{equation}\label{Linfinity}
\as_{\pm\infty}(K) \leq n|K^\circ|
\end{equation}
with equality if and only if $K$ is an ellipsoid.

Giladi, Huang, Sch\"utt and Werner \cite{Giladietal} defined the {\it inner and outer maximal $L_p$ affine surface areas} by
\[
\text{IS}_p(K) = \sup_{K^\prime\in\Cin(K)}\as_p(K^\prime), \quad \text{OS}_p(K) = \sup_{K^\prime\in\Cout(K)}\as_p(K^\prime),
\]
respectively, and the {\it inner and outer minimal $L_p$ affine surface areas} by
\[
\text{is}_p(K) = \inf_{K^\prime\in\Cin(K)}\as_p(K^\prime), \quad \text{os}_p(K) = \inf_{K^\prime\in\Cout(K)}\as_p(K^\prime),
\]
respectively. The relevant $p$ ranges for $\text{IS}_p, \text{OS}_p$ and $\text{os}_p$ are the intervals $[0,n]$, $[n,\infty]$ and $(-n,0]$ respectively; there is no interesting $p$ range for the functional $\text{is}_p$, which is identically zero for all $K$ and all $p$.  Taking $\varphi(t)=t^{\frac{p}{n+p}}$ or $\psi(t)=t^{\frac{p}{n+p}}$ with the corresponding $p$ interval, we recover these definitions from those of the extremal $L_\varphi$ and $L_\psi$ affine surface areas given in Section \ref{defsection}. 

\vspace{2mm}

As a corollary to Theorem \ref{BSvarphi}, we obtain  Blaschke-Santal\'o type inequalities for the extremal $L_p$ affine surface areas.

\begin{corollary}\label{BSineqs}
Let $K$ be a convex body in $\R^n$ with centroid at the origin.% and $p\in[0,n]$.  Then 
\begin{itemize}
\item[(i)] For all $p\in[0,n]$ we have  $\ISp(K)\ISp(K^\circ) \leq \ISp(B_n)^2$. Equality holds trivially if $p=n$.

\vspace{1mm}

\item[(ii)] For all $p\in[n,\infty]$ we have  
    $\OSp(K)\OSp(K^\circ) \leq \OSp(B_n)^2$. Equality holds trivially if $p=n$.
\end{itemize} 
Equality holds in each case  if and only if $K$ is an ellipsoid.
\end{corollary}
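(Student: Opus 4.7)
The plan is to specialize Theorem \ref{BSvarphi} to power functions $\varphi(t)=t^{p/(n+p)}$, handling the three degenerate endpoints $p\in\{0,n,\infty\}$ separately. The one step requiring real care will be the exponent bookkeeping in part (ii), matching the Ludwig duality $\varphi\leftrightarrow\varphi^*$ against the involution $p\mapsto n^2/p$ on the $p$-range; the rest is essentially routine verification of hypotheses.

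For part (i) with $p\in(0,n)$, I would set $\alpha=p/(n+p)\in(0,1/2)$ and take $\varphi(t)=t^\alpha$, so that $\as_\varphi=\as_p$ and $\ISphi=\ISp$. Concavity, the limits $\varphi(t)\to 0$ as $t\to 0^+$ and $\varphi(t)/t\to 0$ as $t\to\infty$, and strict decrease of $\varphi(t)/\sqrt{t}=t^{\alpha-1/2}$ all follow from $\alpha\in(0,1/2)$, so $\varphi\in\fconcstargen$. The submultiplicativity hypothesis of Theorem \ref{BSvarphi} is satisfied with equality, since $\varphi(t)\varphi(1/t)=t^\alpha\cdot t^{-\alpha}=1=\varphi(1)^2$. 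Inequality \eqref{BS1} then yields the claim, with equality characterizing ellipsoids because $\varphi$ is strictly increasing.

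For part (ii) with $p\in(n,\infty)$, I would push the argument through the Ludwig duality \eqref{duality}. Setting $q=n^2/p\in(0,n)$ and $\varphi(t)=t^{q/(n+q)}$, a direct computation gives $\varphi^*(t)=t\cdot t^{-q/(n+q)}=t^{n/(n+q)}=t^{p/(n+p)}$, so that $\as_{\varphi^*}=\as_p$ and $\OSphistar=\OSp$. The verifications from part (i) apply verbatim to this $\varphi$, so inequality \eqref{BS2} yields $\OSp(K)\OSp(K^\circ)\leq\OSp(B_n)^2$ with the same equality characterization.

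The three endpoints require direct treatment, since the corresponding power functions lie outside $\fconcstargen$. For $p=0$, \eqref{asp} reduces to $\as_0(K)=n|K|$, whence $\ISp(K)=n|K|$ (attained at $K$ itself, since $K\in\Cin(K)$), and the claim reduces to the classical Blaschke-Santal\'o inequality; the case $p=\infty$ is symmetric via $\as_\infty(K)=n|K^\circ|$ from \eqref{asinfty}. For the self-dual case $p=n$ (where $\varphi(t)=\sqrt{t}$), every centered ball satisfies $\as_n(rB_n)=|\partial B_n|$ by \eqref{asball}, while Lemma \ref{isoperlemphi} forces $\as_n(K')\leq|\partial B_n|$ on every centered convex body $K'$. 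Since a small centered ball is always inscribed in $K$ and a large one circumscribed, $\ISp(K)=\OSp(K)=|\partial B_n|$ for every $K$, and both sides of the inequality collapse trivially to $|\partial B_n|^2$.
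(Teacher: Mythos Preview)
Your argument for $p\in(0,n)$, for $p\in(n,\infty)$ via the duality $p\mapsto n^2/p$, and for the endpoints $p=0$ and $p=n$ is correct and matches the paper's proof essentially line for line (indeed, your treatment of $p=n$ is more self-contained than the paper's, which simply cites \cite{Giladietal} for $\text{IS}_n(K)=|\partial B_n|$).

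The one genuine gap is at $p=\infty$. You assert that $\as_\infty(K)=n|K^\circ|$ ``from \eqref{asinfty},'' but \eqref{asinfty} is merely the definition of $\as_\infty$ as a curvature integral; it does not yield that identity. In fact the identity is false in general: for a polytope $P$ one has $\kappa(P,x)=0$ almost everywhere, so $\as_\infty(P)=0$, while $n|P^\circ|>0$. What is available is the \emph{inequality} \eqref{Linfinity}, namely $\as_\infty(K)\le n|K^\circ|$, with equality only for ellipsoids. The paper's argument uses exactly this: for any $K'\in\Cout(K)$ one has $\as_\infty(K')\le n|(K')^\circ|$, and since $(K')^\circ\in\Cin(K^\circ)$ this is at most $n|K^\circ|$; hence $\text{OS}_\infty(K)\le n|K^\circ|$, and multiplying the corresponding bound for $K^\circ$ and applying Blaschke--Santal\'o finishes. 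Your ``symmetric'' one-liner hides this step and, as written, rests on a false premise; replacing the claimed identity by the inequality \eqref{Linfinity} and tracing through the suprema as above repairs the argument.
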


\begin{proof}
Let $\varphi(t)=t^{\frac{p}{n+p}}$. For $p\in[0,n]$, we have $\frac{p}{n+p}\in[0,1/2]$. Thus, $\lim_{t\to 0}\varphi(t)=\lim_{t\to\infty}\frac{\varphi(t)}{t}=0$ and $\varphi(0)=0$. For $p\in[0,n)$, the function  $\varphi(t)/\sqrt{t}=t^{\frac{p-n}{2(n+p)}}$ is strictly decreasing. Since $\varphi^{\prime\prime}(t)<0$ for $p\in(0,n)$, this shows that  $\varphi\in\fconcstargen$ for all $p\in(0,n)$. Moreover, $\varphi$ is  strictly increasing when $p\in(0,n]$, and $\varphi(t)\varphi(1/t)=\varphi(1)^2$. Thus $\varphi$ satisfies the hypotheses of Theorem \ref{BSvarphi} for any $p\in(0,n)$, so $\ISp(K)\ISp(K^\circ)\leq\ISp(B_n)^2$ holds for all $p\in(0,n)$.

For $p=0$, it was shown in \cite{Giladietal} that $\text{IS}_0(K)=n|K|$. Thus by the Blaschke-Santal\'o inequality,
\[
\text{IS}_0(K)\text{IS}_0(K^\circ) = n^2|K|\cdot|K^\circ|\leq n^2| B_n|^2=\text{IS}_0(B_n)^2.
\]
For $p=n$, it was also shown in \cite{Giladietal} that $\text{IS}_n(K)=|\partial B_n|$, so
\[
\text{IS}_n(K)\text{IS}_n(K^\circ)=|\partial B_n|^2=\text{IS}_n(B_n)^2
\]
and equality holds trivially. This proves (i). 

For (ii), note that $\varphi^*(t)=t^{\frac{n}{n+p}}$. From  \eqref{genas1} and \eqref{asp} it follows that (see also \cite{Hug96-2}, \cite[Cor. 3.1]{WernerYe08} and  \cite[Thm. 4]{Ludwig2010})
 \[
 \as_{\varphi^*}(K)=\as_{n^2/p}(K).
 \]
 Therefore, by \eqref{duality},  the Blaschke-Santal\'o inequality for $\ISp$ and Lemma \ref{ballLem},
 \[
 \text{OS}_{n^2/p}(K)\text{OS}_{n^2/p}(K^\circ)=\ISp(K)\ISp(K^\circ)\leq \ISp(B_n)^2= \text{OS}_{n^2/p}(B_n)^2.
 \]
 Now  we  replace $n^2/p$ for $p\in(0,n]$ by $p$ for $p\in[n,\infty)$ to obtain the desired inequality. In the case $p=n$, equality holds trivially since
 \[
 \text{OS}_n(K)\text{OS}_n(K^\circ)=\text{IS}_n(K)\text{IS}_n(K^\circ)=\text{IS}_n(B_n)^2.
 \]
 
 For the special case $p=\infty$, we apply \eqref{Linfinity} and the Blaschke-Santal\'o inequality to get 
 \begin{align*}
 \text{OS}_\infty(K)\text{OS}_\infty(K^\circ) 
 &= \sup_{K^\prime\in\Cout(K)}\as_\infty(K^\prime)\cdot\sup_{K^{\prime\prime}\in\Cout(K^\circ)}\as_\infty(K^{\prime\prime})\\
 &\leq \sup_{K^\prime\in\Cout(K)}n|(K^\prime)^\circ|\cdot\sup_{K^{\prime\prime}\in\Cout(K^\circ)}n|(K^{\prime\prime})^\circ|\\
 &=\sup_{(K^\prime)^\circ\in\Cin(K^\circ)}n|(K^\prime)^\circ|\cdot\sup_{(K^{\prime\prime})^\circ\in\Cin(K)}n|(K^{\prime\prime})^\circ|\\
 &\leq n^2|K|\cdot|K^\circ|\\
 &\leq n^2|B_n|^2\\
 &=\text{OS}_\infty(B_n)^2.
 \end{align*}
%In the second equality, we used the fact that $K$ contains the origin in its interior. 

%\vspace{2mm}

The equality conditions in parts (i) and (ii) follow from those in Theorem \ref{BSvarphi}.
\end{proof}

We also obtain the following inverse Santal\'o  type inequalities for the extremal $L_p$ affine surface areas. To state the result, we first define the following notation. For $p<-n$ and $\psi(t)=t^{\frac{n}{n+p}}$, let  $\as_p^*(K):=\as_\psi^*(K)=\as_\psi(K^\circ)=\as_{n^2/p}(K^\circ)$ and define  $\iSpstar(K):=\inf_{K^\prime\in\Cin(K)}\as_p^*(K^\prime)$.
\begin{theorem}
Let $K$ be a convex body in $\R^n$ with centroid at the origin.
\begin{itemize}
    \item[(i)] Let  $p\in(-n,0]$. There exists a positive absolute constant $c$ such that $\oSp(K)\oSp(K^\circ) \geq c^{\frac{n(n-p)}{n+p}}\oSp(B_n)^2$.
    
    \item[(ii)] Let  $p\in(-\infty,-n)$. There exists a positive absolute constant $c$ such that $\iSpstar(K)\iSpstar(K^\circ) \geq c^{\frac{n(n-p)}{n+p}}\iSpstar(B_n)^2$.
\end{itemize}
\end{theorem}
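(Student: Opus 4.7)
The plan is to apply Theorem \ref{reverseBSospsi} with specific choices of $\psi$, in direct analogy with the way Corollary \ref{BSineqs} specializes Theorem \ref{BSvarphi}. For part (i) with $p \in (-n, 0)$ I would take $\psi(t) = t^{p/(n+p)}$, and for part (ii) with $p < -n$ I would take $\psi(t) = t^{n/(n+p)}$. In each case the exponent of $\psi$ is strictly negative (using $n+p > 0$ in part (i) and $n+p < 0$ in part (ii)), so $\psi$ is convex, strictly decreasing, and satisfies $\psi(t) \to \infty$ as $t \to 0$ and $\psi(t) \to 0$ as $t \to \infty$. Hence $\psi \in \fconvstargen$ and Theorem \ref{reverseBSospsi} applies. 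With these choices, the identifications from Section \ref{Lpsection} and from \eqref{dualLpsi} reduce $\oSpsi$ to $\oSp$ in part (i) and $\iSpsistar$ to $\iSpstar$ in part (ii).

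Since $\psi(1) = 1$, the ratio
\[
\frac{\psi(\vrad(K)^{-2n})\psi(\vrad(K^\circ)^{-2n})}{\psi(1)^2}
\]
appearing in Theorem \ref{reverseBSospsi} collapses to $(\vrad(K)\vrad(K^\circ))^{-2np/(n+p)}$ in part (i) and to $(\vrad(K)\vrad(K^\circ))^{-2n^2/(n+p)}$ in part (ii). A direct algebraic check shows that the exponent of $\vrad(K)\vrad(K^\circ)$ is strictly positive in both cases (in (ii) this uses $n+p<0$). Invoking the Bourgain-Milman inverse Santal\'o inequality \eqref{reverseBS} to get $\vrad(K)\vrad(K^\circ) \geq c_0$ for some absolute $c_0 \in (0,1)$, and combining with the ambient factor $c_0^n$ from Theorem \ref{reverseBSospsi}, I would then collect exponents: the arithmetic simplifications $n - \tfrac{2np}{n+p} = \tfrac{n(n-p)}{n+p}$ in part (i) and $n - \tfrac{2n^2}{n+p} = -\tfrac{n(n-p)}{n+p}$ in part (ii) produce lower bounds of the form $c_0^{n(n-p)/(n+p)}\cdot\oSp(B_n)^2$ and $c_0^{-n(n-p)/(n+p)}\cdot\iSpstar(B_n)^2$, respectively. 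The stated form $c^{n(n-p)/(n+p)}$ is then recovered by taking $c := c_0 \in (0, 1)$ in part (i) and $c := 1/c_0 > 1$ in part (ii).

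The endpoint $p = 0$ of part (i) requires separate handling, since $\psi \equiv 1$ falls outside $\fconv$. Here I would use the identity $\as_0(K') = n|K'|$, which immediately gives $\text{os}_0(K) = n|K|$; the conclusion then reduces directly to the inverse Santal\'o inequality $|K| \cdot |K^\circ| \geq c^n|B_n|^2$ after noting that $\text{os}_0(B_n) = n|B_n|$ and $n(n-p)/(n+p)|_{p=0}=n$.

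Overall the argument is a templated specialization of Theorem \ref{reverseBSospsi} in the spirit of Corollary \ref{BSineqs}, and no new ideas are needed beyond it. The only delicate point is sign-tracking: the exponent $n(n-p)/(n+p)$ is positive for $p \in (-n, 0]$ but negative for $p < -n$, which forces the absolute constant $c$ to be chosen less than $1$ in part (i) and greater than $1$ in part (ii) so that both inequalities take the uniform symbolic form appearing in the statement.
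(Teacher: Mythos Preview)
Your proof is correct, and the underlying mechanism---affine isoperimetric inequality plus the Bourgain--Milman bound---matches the paper's. The paper's route is slightly more direct: for part (i) it applies the $L_p$ affine isoperimetric inequality \eqref{Lp2} to each factor of $\oSp(K)\oSp(K^\circ)$, obtaining the single power $(\vrad(K)\vrad(K^\circ))^{n(n-p)/(n+p)}\as_p(B_n)^2$ in one step, and then invokes inverse Santal\'o once. Your path through Theorem~\ref{reverseBSospsi} effectively splits this power of $\vrad(K)\vrad(K^\circ)$ into the piece $c_0^n$ already absorbed inside that theorem and the residual $\psi$-ratio, to which you apply inverse Santal\'o a second time; the exponents then recombine to the same total $n(n-p)/(n+p)$. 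For part (ii) the paper reduces to part (i) via the duality $\iSpstar(K)=\text{os}_{n^2/p}(K^\circ)$, whereas you specialize \eqref{reverseBS2} directly with $\psi(t)=t^{n/(n+p)}$. Your approach is marginally more roundabout but has the merit of presenting the $L_p$ result uniformly as a corollary of the general $L_\psi$ theorem, exactly in the spirit of how Corollary~\ref{BSineqs} was derived from Theorem~\ref{BSvarphi}; the paper instead exploits the homogeneity of $\as_p$ to shortcut the computation. Your separate handling of $p=0$ is also correct and necessary for your route, since $\psi\equiv 1\notin\fconv$; the paper's direct use of \eqref{Lp2} covers $p=0$ without a case split.
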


\begin{proof}
Using  inequalities \eqref{Lp2} and \eqref{reverseBS}  we derive
\begin{align*}
    \oSp(K)\oSp(K^\circ) &=\inf_{K^\in\Cout(K)}\as_p(K^\prime)\cdot\inf_{K^{\prime\prime}\in\Cout(K^\circ)}\as_p(K^{\prime\prime})\\
  %  &=\inf_{K^\in\Cout(K)}\as_p(K^\prime+x_{K^\prime})\cdot\inf_{K^{\prime\prime}\in\Cout(K^\circ)}\as_p(K^{\prime\prime}+x_{K^{\prime\prime}})\\
  %  &\geq \inf_{K^\prime\in\Cout(K)}\vrad(K^\prime+x_{K^\prime})^{\frac{n(n-p)}{n+p}}\as_p(B_n)\\
   % &\times\inf_{K^{\prime\prime}\in\Cout(K^\circ)}\vrad(K^{\prime\prime}+x_{K^{\prime\prime}})^{\frac{n(n-p)}{n+p}}\as_p(B_n)\\
    &\geq\inf_{K^\prime\in\Cout(K)}\vrad(K^\prime)^{\frac{n(n-p)}{n+p}}\as_p(B_n)\cdot\inf_{K^{\prime\prime}\in\Cout(K^\circ)}\vrad(K^{\prime\prime})^{\frac{n(n-p)}{n+p}}\as_p(B_n)\\
    &=(\vrad(K)\vrad(K^\circ))^{\frac{n(n-p)}{n+p}}\as_p(B_n)^2\\
    &\geq c^{\frac{n(n-p)}{n+p}}\as_p(B_n)^2\\
    &=c^{\frac{n(n-p)}{n+p}}\oSp(B_n)^2.
\end{align*}
Part (ii) now follows from (i) by using     $\iSpstar(K)=\text{os}_{n^2/p}(K^\circ)$ for $p\in(-\infty,-n)$.
\end{proof}

%%%%%%%%%%%%%%
\section{Appendix}\label{proofs}

\subsection{Proof of Lemma \ref{existence}}

The proof  is similar to that of  \cite[Lemma 3.2]{Giladietal}; we include the arguments for the reader's convenience. Let $\varphi\in\fconcstargen$. By  Lemmas \ref{isoperlemphi} and \ref{monolem},% we obtain
\begin{align*}
    \ISphi(K) =\sup_{K^\prime\in\Cin(K)}\as_\varphi(K^\prime)
  %  &=\sup_{K^\prime\in\Cin(K)}\as_\varphi(K^\prime+x_{K^\prime})\\
   % &\leq \sup_{K^\prime\in\Cin(K)}\as_\varphi(\vrad(K^\prime+x_{K^\prime})B_n)\\
     &\leq\sup_{K^\prime\in\Cin(K)}\as_\varphi(\vrad(K^\prime)B_n)\\
    &=\sup_{K^\prime\in\Cin(K)}\vrad(K^\prime)^n\varphi(\vrad(K^\prime)^{-2n})|\partial B_n|\\
   &\leq \vrad(K)^n\varphi(\vrad(K)^{-2n})|\partial B_n|\\
   &=n|K|\varphi(\vrad(K)^{-2n}),
\end{align*} 
which is finite.  Hence, there exists a sequence $\{C_k\}_{k\in\mathbb{N}}\subset\Cin(K)$ such that for all $k\in\mathbb{N}$, %$C_k\subset K$ and
\[
\as_\varphi(C_k)+\frac{1}{k} \geq \sup_{K^\prime\in\Cin(K)}\as_\varphi(K^\prime).
\] 
On the other hand, $\as_\varphi(C_k)\leq \sup_{K^\prime\in\Cin(K)}\as_\varphi(K^\prime)$ since $C_k\in\Cin(K)$, so by the squeeze theorem
\[
\lim_{k\to\infty}\as_\varphi(C_k) = \sup_{K^\prime\in\Cin(K)}\as_\varphi(K^\prime).
\]
By the Blaschke selection theorem, there exists a subsequence $\{C_{k_j}\}_{j\in\mathbb{N}}$ that converges to a convex set $K_0\subset K$ with respect to the Hausdorff metric as $j\to\infty$. We claim that $K_0$ has nonempty interior. To see this, suppose not. Then 
$\lim_{j\to\infty}|C_{k_j}|=|K_0|=0$.  By  Lemma \ref{isoperlemphi} and the definition of $\fconcstargen$ this implies
\[
\lim_{j\to\infty}\as_\varphi(C_{k_j})
\leq \lim_{j\to\infty}\as_\varphi(\vrad(C_{k_j})B_n)
=\lim_{j\to\infty}\vrad(C_{k_j})^n\varphi(\vrad(C_{k_j})^{-2n})|\partial B_n|=0.
%\as_\varphi(B_n)\lim_{j\to\infty}\vrad(C_{k_j})^{n(1-2\alpha)}
%=\as_\varphi(B_n)\left(\frac{\lim_{j\to\infty}|C_{k_j}|}{|B_n|}\right)^{1-2\alpha}=0.
\]
Since $K$ contains the origin in its interior, there exists $\varepsilon>0$ such that $\varepsilon B_n\subset K$. 
Thus,
\[
0=\lim_{j\to\infty}\as_\varphi(C_{k_j})=\sup_{K^\prime\in\Cin(K)}\as_\varphi(K^\prime)\geq \as_\varphi(\varepsilon B_n)=\varepsilon^n\varphi(\varepsilon^{-2n})|\partial B_n|>0,
\]
a contradiction. We have therefore shown that $K_0$ is a convex body in $\R^n$. 

Finally, we show that $\ISphi(K)=\as_\varphi(K_0)$. On one hand,  $\ISphi(K) \geq \as_\varphi(K_0)$ since $K_0\in\Cin(K)$, while on the other hand we can apply the upper semicontinuity \eqref{uppersemi} of the $L_\varphi$ affine surface area to get %$\as_\varphi$  to get
\[
\ISphi(K)=\sup_{K^\prime\in\Cin(K)}\as_\varphi(K^\prime)=\limsup_{j\to\infty}\as_\varphi(C_{k_j})\leq \as_\varphi(K_0).
\]
This proves (i).

\vspace{2mm}

Next, we show (iii), again following the  arguments in \cite{Giladietal}. The proof is similar to that of (i), but to apply the Blaschke selection theorem we first have to show that any body achieving the infimum lies inside of a larger body that we can draw a convergent subsequence from. Since $K$ is bounded, there exists $R>0$ such that $K\subset RB_n$. First, we show that for all $K^\prime\in\Cout(K)$ with $|K^\prime|\geq (Rn)^n|B_n|$, there exists $\tilde{K}\in\Cout(K)$ such that $\tilde{K}\subset RnB_n$ and $\as_\psi(K^\prime) \geq \as_\psi(\tilde{K})$. There exists an affine transformation $A:\R^n\to\R^n$ with $|\det(A)|=1$ and a positive number $t$ such that $tB_n$ has maximum volume among all ellipsoids contained in $A(K^\prime)$. By F. John's theorem \cite{John1948}, %we obtain
\begin{equation}\label{john}
tB_n \subset A(K^\prime) \subset tnB_n.
\end{equation}
Thus 
\[
(Rn)^n|B_n| \leq |K^\prime|=|A(K^\prime)| \leq (tn)^n|B_n|, 
\]
which implies $R\leq t$. Hence, %the following inclusions hold:
\[
K \subset RB_n = \frac{R}{t}\cdot tB_n \subset \frac{R}{t}A(K^\prime).
\]
Now consider  the affine transformation $T=rA$ with $r=R/t\leq 1$, and set $\tilde{K}:=T(K^\prime)$. Then by \eqref{john}, $\tilde K\subset \frac{R}{t}(tnB_n)=RnB_n$ and $\tilde K\supset \frac{R}{t}(tB_n)=RB_n\supset K$. By Lemma \ref{scalinglem}, $\as_\psi(rK^\prime)\leq r^n\as_\psi(K^\prime)$. The $\text{SL}(n)$ invariance of $\as_\psi$ means that $\as_\psi(A(K))=\as_\psi(K)$ for any affine transformation $A$ with $|\det(A)|=1$. Putting all of this together, we obtain
\[
\as_\psi(\tilde K) = \as_\psi(rA(K^\prime))\leq r^n\as_\psi(A(K^\prime))= r^n \as_\psi(K^\prime)\leq \as_\psi(K^\prime).
\]
This proves the claim regarding those $K^\prime\in\Cout(K)$ with $|K^\prime|\geq (Rn)^n|B_n|$. 

Now suppose that $K^\prime\in\Cout(K)$ satisfies $|K^\prime| \leq (Rn)^n|B_n|$. Since $K^\prime$ has nonempty interior, there exists $r>0$ such that $rB_n\subset K^\prime$. For each $x\in K^\prime$, consider the cone $C_x$ with apex $x$ and base $x^\perp\cap rB_n$, and let $h_x$ denote the height of this cone. Then $C_x\subset K^\prime$, so
\[
|C_x| =\frac{h_x r^{n-1}}{n}|B_{n-1}|\leq |K^\prime| \leq (Rn)^n|B_n|
\]
%Now setting $h_x=\max$ (the radial function of $K^\prime$ in the direction $x$), we obtain
which implies $K^\prime \subset \frac{n^{n+1}R^n|B_n|}{r^{n-1}|B_{n-1}|}B_n$.

Therefore, in these two cases we have shown that any convex body $K^\prime\in\Cout(K)$ that achieves the infimum must be contained in the ball  $\max\left\{Rn,\frac{n^{n+1}R^n|B_n|}{r^{n-1}|B_{n-1}|}\right\}B_n$. So assume that $K^\prime$ is contained in this ball. We can now use the argument from the proof of (i). By  Lemma \ref{isoperlem2}, \eqref{asball2} and \eqref{monotonepsi} we derive %, and the fact that $\psi$ is decreasing, we derive
\begin{align*}
\as_\psi(K^\prime) \geq \as_\psi(\vrad(K^\prime)B_n)
\geq \as_\psi(\vrad(K)B_n).
% &= \vrad(K^\prime)^n\psi(\vrad(K^\prime)^{-2n})|\partial B_n|\\
%&=\vrad(K^\prime)^{n(1-2\beta)}\as_\psi(B_n)\\
%&\geq \vrad(K)^{n(1-2\beta)}\as_\psi(B_n)
\end{align*}
This shows that the infimum $\oSpsi(K)$ is finite. %Here we have used the fact that $\psi\in\fconv$. 
Now as before, there exists a sequence $\{C_k\}_{k\in\mathbb{N}}$ of convex bodies in $\Cout(K)$ such that for all $k\in\mathbb{N}$,
\[
\as_\psi(C_k) \leq \inf_{K^\prime\in\Cout(K)}\as_\psi(K^\prime)+\frac{1}{k} .
\]
Conversely, $\as_\psi(C_k)\geq \inf_{K^\prime\in\Cout(K)}\as_\psi(K^\prime)$ since  $C_k\in\Cout(K)$, so by another application of the squeeze theorem we deduce that 
\[
\lim_{k\to\infty}\as_\psi(C_k) = \oSpsi(K).
\]
By the Blaschke selection theorem, there exists a subsequence $\{C_{k_j}\}_{j\in\mathbb{N}}$ in $\Cout(K)$ that converges to a convex set $K_0\supset K$ with respect to the Hausdorff metric as $j\to\infty$. Since $K_0\supset K$, it has nonempty interior. Therefore, we may apply the lower semicontinuity \eqref{lowersemi} of $\as_\psi$  to derive
\[
\oSpsi(K) = \inf_{K\in\Cout(K)}\as_\psi(K^\prime)=\liminf_{j\to\infty}\as_\psi(C_{k_j}) \geq \as_\psi(K_0).
\]
Part (ii) now follows from (i) and the formula $\OSphistar(K)=\ISphi(K)$, and (iv) follows from (iii) and the formula $\iSpsistar(K)=\oSpsi(K^\circ)$.% as  (iii). %This completes the proof. 
\qed

%%%%%%%%%%%%%%%%%%

\subsection{Proof of Proposition \ref{contlem}}

The argument is similar to that of \cite[Prop. 3.3]{Giladietal}. By hypothesis, $K$ has centroid at the origin, so there exists a Euclidean ball $\rho B_n$ of positive radius $\rho>0$ that is contained in $K$, and a sequence of convex bodies $\{K_\ell\}_{\ell\in\mathbb{N}}$  that have centroid at the origin and converge to $K$ in the Hausdorff metric. Thus, for every $\varepsilon>0$ there exists an integer $\ell_0$ such that for all $\ell\geq \ell_0$, 
\begin{equation*}
    K_\ell\subset K+\varepsilon B_n\quad \text{ and }\quad K\subset K_\ell+\varepsilon B_n.
\end{equation*}
For sufficiently small $\varepsilon$, we may assume that for all $\ell\geq \ell_0$, the inclusion $\frac{\rho}{10}B_n\subset K_\ell$ holds. The previous inclusions together imply that for all $\ell\geq\ell_0$,
\begin{equation}\label{inclusion1}
K_\ell \subset K+\varepsilon B_n =K+\frac{\varepsilon}{\rho}\cdot \rho B_n \subset K+\frac{\varepsilon}{\rho}K=\left(1+\frac{\varepsilon}{\rho}\right)K
\end{equation}
and
\begin{equation}\label{inclusion2}
K\subset K_\ell+\varepsilon B_n =K_\ell+\frac{10\varepsilon}{\rho}\cdot \frac{\rho}{10} B_n
\subset K_\ell+\frac{10\varepsilon}{\rho}K_\ell=\left(1+\frac{10\varepsilon}{\rho}\right)K_\ell.
\end{equation}
From \eqref{inclusion1}, the monotonicity property \eqref{mono1} of $\ISphi$ and Lemma \ref{scalinglem}, we get that for all $\ell\geq \ell_0$,
\begin{equation}\label{continc1}
    \ISphi(K_\ell) \leq \ISphi\left(\left(1+\frac{\varepsilon}{\rho}\right)K\right) =\sup_{K^\prime\in\Cin(K)}\as_\varphi\left(\left(1+\frac{\varepsilon}{\rho}\right)K^\prime\right)\leq \left(1+\frac{\varepsilon}{\rho}\right)^n\ISphi(K). 
\end{equation}
Similarly, from \eqref{inclusion2} we derive that for all $\ell\geq\ell_0$,
\begin{equation}\label{continc2}
    \ISphi(K) \leq %\ISphi\left(\left(1+\frac{10\varepsilon}{\rho}\right)K_\ell\right)\leq
    \left(1+\frac{10\varepsilon}{\rho}\right)^n\ISphi(K_\ell).
\end{equation}
The previous two inequalities now imply that for all $\ell\geq \ell_0$,
\[
\left(1+\frac{\varepsilon}{\rho}\right)^{-n}\ISphi(K_\ell)
\leq \ISphi(K)
\leq \left(1+\frac{10\varepsilon}{\rho}\right)^n\ISphi(K_\ell).
\]
Since $\varepsilon$ was arbitrary, claim (i) follows. 

Next, let $\varphi^*\in\fconcstardualgen$. Then for all $\ell\geq \ell_0$, inclusions \eqref{continc1} and \eqref{continc2} imply that  $K_\ell^\circ \supset\left(1+\frac{\varepsilon}{\rho}\right)^{-1}K^\circ$ and  $K^\circ \supset \left(1+\frac{10\varepsilon}{\rho}\right)^{-1}K_\ell^\circ$, respectively. Thus for all $\ell\geq \ell_0$, 
\begin{align*}
   \OSphistar(K_\ell) &=\ISphi(K_\ell^\circ) \geq \ISphi\left(\left(1+\tfrac{\varepsilon}{\rho}\right)^{-1}K^\circ\right)
    \geq \left(1+\tfrac{\varepsilon}{\rho}\right)^{-n}\ISphi(K^\circ)
    = \left(1+\tfrac{\varepsilon}{\rho}\right)^{-n}\OSphistar(K)
    \end{align*}
    and
    \begin{align*}
    \OSphistar(K) =\ISphi(K^\circ) &\geq \ISphi\left(\left(1+\tfrac{10\varepsilon}{\rho}\right)^{-1}K_\ell^\circ\right)\\
    &\geq \left(1+\tfrac{10\varepsilon}{\rho}\right)^{-n}\ISphi(K_\ell^\circ)
    = \left(1+\tfrac{10\varepsilon}{\rho}\right)^{-n}\OSphistar(K_\ell).
\end{align*} 
Therefore, for all $\ell\geq \ell_0$ we have
\[
\left(1+\frac{10\varepsilon}{\rho}\right)^{-n}\OSphistar(K_\ell)\leq \OSphistar(K) \leq \left(1+\frac{\varepsilon}{\rho}\right)^n \OSphistar(K_\ell).
\]
Since $\varepsilon>0$ was arbitrary, claim (ii) follows. The arguments to prove (iii) and (iv) are similar.
\qed
%%%%%%%%%%%%%%%%%%%%%%%%%%%
\subsection{Proof of Remark \ref{fnexample1}}\label{arctanexamplepf} %$\varphi_m(t)=\arctan(t^{1/m})$ lies in $\fconcstargen$ for $m\geq 2$}\label{arctansection}

For any $t>0$, we have $\varphi_m(t)>0$. The first two derivatives of $\varphi_m$ are:
\begin{align}
    \varphi_m^\prime(t) &= \frac{1}{2m}\left(t^{\frac{m-1}{m}}+t^{\frac{m+1}{m}}\right)^{-1}\\
    \varphi_m^{\prime\prime}(t) &=-\frac{1}{m^2}\left(t^{\frac{m-1}{m}}+t^{\frac{m+1}{m}}\right)^{-2}\left((m-1)t^{-\frac{1}{m}}+(m+1)t^{\frac{1}{m}}\right).
\end{align}
Since $\varphi_m^{\prime\prime}(t)<0$ for all $t>0$, the function $\varphi_m:(0,\infty)\to(0,\infty)$ is concave. By continuity, $\lim_{t\to 0}\varphi_m(t)=\arctan(0)=0$ and since $\varphi_m(t)$ is monotonically increasing and bounded above by $\pi/2$,
\[
0\leq \lim_{t\to\infty}\frac{\varphi_m(t)}{t}\leq\lim_{t\to\infty}\frac{\pi}{2t}=0.
\]
Furthermore, $\varphi_m(0)=0$, which shows that $\varphi_m\in\fconc$. 

To prove  that $\varphi_m\in\fconcstargen$, it remains to show that $\varphi_m(t)/\sqrt{t}$ is decreasing. We have  $\frac{d}{dt}(\varphi_m(t)/\sqrt{t})<0$ if and only if $\varphi_m^\prime(t)<\frac{\varphi_m(t)}{2t}$, which is equivalent to
\begin{equation}\label{finalineq}
    t^{1/m}+t^{-1/m}>\left[\frac{m}{2}\cdot\arctan(t^{1/m})\right]^{-1}.
\end{equation}
We show that $(\arctan(x))^{-1}<x+\frac{1}{x}$ for any $x>0$, which is equivalent to    $\arctan(x)>\frac{x}{x^2+1}$ for $x>0$. Let  $g(x):=\arctan(x)-\frac{x}{x^2+1}$. Then  $g^\prime(x)=\frac{2x^2}{(x^2+1)^2}>0$ for any $x>0$, so $g$ is  increasing on $(0,\infty)$. Thus for any $x>0$ we have  $g(x)>g(0)=0$, which proves the inequality. Taking $x=t^{1/m}$ and using the fact that $m\geq 2$, we derive
\[
\left[\frac{m}{2}\cdot\arctan(t^{1/m})\right]^{-1} = \frac{2}{m}(\arctan(t^{1/m}))^{-1}\leq (\arctan(t^{1/m}))^{-1}  < t^{1/m}+t^{-1/m}.
\]
%First suppose that $t\geq 1$.  Then $\varphi_m(t)\geq\arctan(1)=\pi/4$, and hence  for all $m>8/(2\pi)\approx 1.273$,
%\[
%\left(\frac{m}{2}\cdot\varphi_m(t)\right)^{-1} \leq \frac{8}{m\pi}<2.
%\left[\frac{m}{2}\cdot\arctan(t^{1/m})\right]^{-1}\leq \frac{4}{j\pi}\leq 1.
%\]
%The inequality  $t^{1/m}+t^{-\frac{1}{m}}\geq 2$ holds for any $t>0$ with equality at $t=1$. %We have thus shown that \eqref{finalineq} holds for all $t\geq 1$. So now suppose that $t\in(0,1)$. The inequality $\arctan(x)\geq x-\frac{x^3}{3}$ for $x\geq 0$ implies
%\[
%(\arctan(x))^{-1}\leq \frac{1}{x-x^3/3}=\frac{1}{x}\cdot\frac{1}{1-(x/\sqrt{3})^2}<\frac{3}{2x}<\frac{m}{2}\left(\frac{1}{x}+x\right).***fix last ineq***
%\]
%Taking $x=t^{\frac{1}{2(j+1)}}$ yields \eqref{finalineq} in the case $t\in(0,1)$. 

%\subsection{Proof that $\varphi_j(t)=\arctan(t^{\frac{1}{2(j+1)}})$ satisfies the submultiplicativity property of Theorem \ref{BSvarphi}} 

%%%%%%%%%%%%%%%%%%%%%%%%%%%%%%%%%%%%%%%%%
\subsection{Remarks on the Centroid Assumption}\label{centroidsec}

We defined $\Cin(K)$ and $\Cout(K)$ to include only bodies with centroid at the origin so that we can apply the $L_\varphi$ and $L_\psi$ affine isoperimetric inequalities in Lemmas \ref{isoperlemphi} and \ref{isoperlem2}, respectively. This, in turn, allows us to  state our results for the broad range of functions in    $\fconcstargen\cup\fconcstardualgen$ and all of the functions in $\fconv$.

Ye \cite{Deping2014Steiner} used Steiner symmetrizations to prove the following $L_\varphi$ and $L_\psi$ affine isoperimetric inequalities.  These inequalities do not have the restriction that the centroid of the body is the origin; on the other hand, they impose  some additional restrictions  on the functions $\varphi$ and $\psi$, and in the former case, on the curvature of $K$. 

%Our main tool will be the following $L_\varphi$ and $L_\psi$ affine isoperimetric inequalities due to Ye \cite{Deping2014Steiner}. 
\begin{lemma}\cite[Thm. 3.2]{Deping2014Steiner}\label{isoperlemye}
Let $K\in\K$ and suppose that $\varphi\in\fconc$ is such that the function $F_n(t)=\varphi(t^{n+1})$ for $t\in(0,\infty)$ is concave. Then
\begin{align*}
    \as_\varphi(K) &\leq \as_\varphi(\vrad(K)B_n). \end{align*}
Moreover, if $F_n(\cdot)$ is strictly concave and $K$ has positive Gaussian curvature almost everywhere, then equality holds if and only if $K$ is an origin-symmetric ellipsoid.
\end{lemma}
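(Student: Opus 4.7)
The plan is to prove this via Steiner symmetrization, which is the natural tool given that the hypothesis is phrased in terms of concavity of $F_n(t) = \varphi(t^{n+1})$ and that the inequality compares $\as_\varphi(K)$ to its value on a Euclidean ball of equal volume. The two-step strategy is standard: first show that Steiner symmetrization does not decrease the $L_\varphi$ affine surface area, i.e., for any hyperplane $H$ and any $K\in\K$,
\begin{equation*}
\as_\varphi(S_H K) \;\geq\; \as_\varphi(K),
\end{equation*}
and second, exploit the fact that there exists a sequence $H_1,H_2,\ldots$ of hyperplanes for which the iterated bodies $K_j = S_{H_j}\circ\cdots\circ S_{H_1}(K)$ converge in the Hausdorff metric to $\vrad(K)B_n$. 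Since volume is preserved by Steiner symmetrization, combining monotonicity with the upper semicontinuity \eqref{uppersemi} of $\as_\varphi$ yields $\as_\varphi(K)\leq\limsup_{j\to\infty}\as_\varphi(K_j)\leq\as_\varphi(\vrad(K)B_n)$, as required.

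The symmetrization step is the substance of the proof. Rotating so that $H=\{x_n=0\}$, write $K$ as the region between two concave profile functions $f$ and $-g$ on the convex projection $\overline{K}=K|H$. Then $S_H K$ has profiles $\pm\tfrac{1}{2}(f+g)$. Parametrizing the top and bottom of $\partial K$ as graphs and expressing $\kappa$ and $\langle x,N_{\partial K}(x)\rangle$ in terms of $f,g$ and their Hessians, one obtains
\begin{equation*}
\as_\varphi(K) = \int_{\overline{K}} \varphi\!\left(\frac{\det(-\nabla^2 f)}{(f-\langle x',\nabla f\rangle)^{n+1}}\right)(f-\langle x',\nabla f\rangle)\,dx' \;+\; (\text{same expression with }g),
\end{equation*}
with an analogous formula for $S_H K$ in which $f$ and $g$ are replaced by $\tfrac{1}{2}(f+g)$. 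The goal is then a pointwise comparison between the two integrands on $\overline{K}$.

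The main obstacle is precisely this pointwise inequality, which is where the concavity hypothesis on $F_n$ enters. The quantity $(f-\langle x',\nabla f\rangle)$ is linear in $f$ and so averages exactly under the replacement $f,g\mapsto\tfrac{1}{2}(f+g)$; meanwhile, by the Brunn--Minkowski type inequality for Hessian determinants of concave functions,
\begin{equation*}
\det\bigl(-\nabla^2\tfrac{f+g}{2}\bigr)^{1/n} \;\geq\; \tfrac{1}{2}\det(-\nabla^2 f)^{1/n} + \tfrac{1}{2}\det(-\nabla^2 g)^{1/n}.
\end{equation*}
Rewriting the integrand in terms of $F_n$ converts the problem into applying Jensen's inequality for the concave function $F_n$ to the mean of the curvature contributions from the upper and lower halves, and a careful bookkeeping of the linear term $(f-\langle x',\nabla f\rangle)$ then yields the monotonicity. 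For the equality case, strict concavity of $F_n$ forces equality in Jensen's inequality pointwise, and the assumption that $K$ has positive Gaussian curvature a.e.\ allows one to conclude that each symmetrization $S_H$ acts as a translation on $K$, which by the classical characterization of equality cases in repeated Steiner symmetrization forces $K$ to be an origin-symmetric ellipsoid.
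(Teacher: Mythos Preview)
The paper does not actually prove this lemma: it is quoted verbatim from Ye \cite{Deping2014Steiner} (as the citation tag in the lemma header indicates) and appears only in the appendix discussion of the centroid assumption, with no accompanying argument. So there is no ``paper's own proof'' to compare against.

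That said, your outline matches Ye's original approach: one shows monotonicity of $\as_\varphi$ under Steiner symmetrization via the graph parametrization you wrote down, then passes to the ball by iterated symmetrization and upper semicontinuity. Two small points to tighten. First, since $f,g$ are defined on the $(n-1)$-dimensional shadow $\overline{K}$, their Hessians are $(n-1)\times(n-1)$, so Minkowski's determinant inequality gives exponent $1/(n-1)$, not $1/n$. Second, the ``careful bookkeeping'' you allude to is the genuine content: setting $u=f-\langle x',\nabla f\rangle$, $v=g-\langle x',\nabla g\rangle$, one has to combine the linearity of $u,v$ in $f,g$, the concavity of $\det^{1/(n-1)}$, and the concavity of $F_n(t)=\varphi(t^{n+1})$ into a single Jensen-type step---this is precisely where the hypothesis on $F_n$ (rather than just concavity of $\varphi$) is needed, and it deserves an explicit inequality rather than a sentence. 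Your equality-case sketch is in the right direction but loose: strict concavity of $F_n$ and the positive-curvature assumption are used to force equality in both the Minkowski determinant step and the Jensen step simultaneously, which pins down that $f$ and $g$ differ by an affine function for every direction, hence $K$ is an ellipsoid.
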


%The following affine isoperimetric inequality for convex functions is also due to Ye \cite{Deping2014Steiner}.

\begin{lemma}\cite[Thm. 3.4]{Deping2014Steiner}\label{isoperlemye2}
Let $K\in\K$ and suppose that $\psi\in\fconc$ is such that the function $G_n(t)=\psi(t^{n+1})$ for $t\in(0,\infty)$ is convex. Then
\begin{align*}
    \as_\psi(K) &\geq \as_\psi(\vrad(K)B_n). \end{align*}
Moreover, if $G_n(\cdot)$ is strictly convex, then  equality holds if and only if $K$ is an origin-symmetric ellipsoid.
\end{lemma}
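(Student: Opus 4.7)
The plan is to establish this inequality via an iterated Steiner symmetrization argument, paralleling the strategy used for Lemma \ref{isoperlemye}. The central step is a Steiner monotonicity estimate: for every direction $u \in S^{n-1}$, if $S_u K$ denotes the Steiner symmetral of $K$ with respect to the hyperplane $u^\perp$, then
\[
\as_\psi(S_u K) \leq \as_\psi(K).
\]
Granting this, a classical theorem furnishes a sequence of directions $\{u_k\}_{k\in\mathbb{N}}\subset S^{n-1}$ such that the iterated symmetrals $K_k := S_{u_k}\circ\cdots\circ S_{u_1}K$ converge to $\vrad(K) B_n$ in the Hausdorff metric, using that Steiner symmetrization preserves volume. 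Combining the monotonicity applied inductively with the lower semicontinuity \eqref{lowersemi} of $\as_\psi$,
\[
\as_\psi(K) \geq \liminf_{k\to\infty} \as_\psi(K_k) \geq \as_\psi(\vrad(K) B_n),
\]
which is the stated inequality.

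To establish the Steiner monotonicity I would proceed chord-by-chord. Parameterize $\partial K$ by two functions $f^\pm : K|u^\perp \to \R$ giving the upper and lower $u$-coordinates of the boundary; Steiner symmetrization replaces these by $\pm\tfrac{1}{2}(f^+ - f^-)$. Push the surface measure forward to $K|u^\perp$ so that $\as_\psi(K)$ becomes an integral over the projection, with both the Gaussian curvature $\kappa(K,\cdot)$ and the support functional $\langle x, N_{\partial K}(x)\rangle$ expressed in terms of the gradients and Hessians of $f^\pm$. After rearrangement, the integrand on each fiber takes the form of $G_n$ evaluated at certain quantities built from $f^+$ and from $f^-$, whereupon the convexity of $G_n$ delivers the pointwise (in $x' \in K|u^\perp$) inequality that integrates up to the desired bound. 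The exponent $n+1$ in the hypothesis $G_n(t) = \psi(t^{n+1})$ is precisely what is needed to absorb the power $\langle x, N_{\partial K}(x)\rangle^{n+1}$ appearing inside $\psi$ in \eqref{defaspsi}.

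For the equality case under strict convexity of $G_n$, equality in the chord-by-chord inequality forces $f^+ = -f^-$ up to a translation along $u$, so $K$ is already invariant under $S_u$ (modulo a translation). If $\as_\psi(K) = \as_\psi(\vrad(K) B_n)$, this rigidity must persist at every step of the Steiner iteration, which in turn forces $K$, after translating, to be Steiner-invariant in every direction through its centroid. Such bodies are Euclidean balls, and the $\text{SL}(n)$ invariance of $\as_\psi$ then upgrades the conclusion to origin-symmetric ellipsoids.

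The principal obstacle is the chord-by-chord Steiner inequality. The delicate point is identifying the precise algebraic form of the integrand after symmetrization so that the convexity hypothesis is applied to $G_n$ (rather than to $\psi$ directly). This requires carefully tracking how the Gauss map and the support functional of $\partial K$ transform when $f^\pm$ are replaced by $\pm\tfrac{1}{2}(f^+ - f^-)$, and recognizing the particular combination of $\kappa$ and $\langle x, N_{\partial K}(x)\rangle^{n+1}$ that fits the functional hypothesis — which is the technical content that distinguishes this from more elementary isoperimetric arguments.
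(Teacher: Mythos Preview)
The paper does not supply its own proof of this lemma: it is quoted verbatim as \cite[Thm.~3.4]{Deping2014Steiner} and used as a black box in the appendix discussion (Subsection~\ref{centroidsec}). Your sketch via Steiner symmetrization --- establish $\as_\psi(S_u K)\leq\as_\psi(K)$ chordwise using the convexity of $G_n$, iterate to approach $\vrad(K)B_n$, and close with the lower semicontinuity \eqref{lowersemi} --- is exactly the strategy of Ye's original argument, so in that sense your proposal matches the source the paper cites rather than anything the present paper does itself.

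One small correction worth flagging: the hypothesis in the statement reads $\psi\in\fconc$, but this is a typographical slip in the paper; the intended class is $\fconv$, consistent with the definition \eqref{defaspsi} of $\as_\psi$ and with \cite[Thm.~3.4]{Deping2014Steiner}. Your sketch implicitly treats $\psi$ as convex and decreasing, which is the correct reading.
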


Thus, one could remove the assumption that the bodies in $\Cin(K)$ or $\Cout(K)$ have centroid at the origin by using Lemmas \ref{isoperlemye} and \ref{isoperlemye2} in all of the proofs instead of Lemmas \ref{isoperlemphi} and \ref{isoperlem2}, but the downside is then one captures  fewer functions from $\fconc$ and $\fconv$. % In particular, as shown in \cite{} (Deping's Steiner symmetrization paper), we only get the $L_p$ affine isoperimetric inequality for $p\in(0,1)$ and $p\in*******$. 
%The statements of the  results in this paper can be modified for this class of functions, allowing us to remove the restriction that $g(K^\prime)=o$ in the definitions of $\Cin(K)$ and $\Cout(K)$.
Furthermore, the assumption  in the definitions of $\Cin(K)$ and $\Cout(K)$ that the bodies have centroid at the origin is tacitly assumed in the definition of the extremal $L_p$ affine surface areas in \cite{Giladietal}. This is because the definition \eqref{asp} of $L_p$ affine surface area is  stated for convex bodies with centroid at the origin. Ultimately, it is an open problem to  %prove or disprove that $g(K)=o$ implies $g(K_{\max}^\varphi)=o$ (and similarly for the other extremal general affine surface areas), and to 
obtain the results of this article for a richer class of functions than $\fconcstargen\cup\fconcstardualgen$.

%%%%%%%%%%%%%%%%%%%%%%%%%%%%%%%%%%%%%%%%%

\section*{Acknowledgments}
The author would like to thank  Elisabeth Werner and Deping Ye for the discussions, and the anonymous referee for carefully reading this manuscript and providing valuable comments.

%%%%%%%%%%%%%%%%%%%%%%%%%

\bibliographystyle{plain}
\bibliography{main}

\begin{thebibliography}{10}

\bibitem{AGA-book}
S.~Artstein-Avidan, A.~Giannopoulos, and V.~D. Milman.
\newblock Asymptotic {G}eometric {A}nalysis, {P}art {I}.
\newblock volume 202 of {\em Mathematical Surveys and Monographs}. American
  Mathematical Society, 2015.

\bibitem{Ball1991}
K.~Ball.
\newblock Volume ratios and a reverse isoperimetric inequality.
\newblock {\em Journal of the London Mathematical Society}, 44(2):351--359,
  1991.

\bibitem{Barany1995}
I.~B\'ar\'any.
\newblock The {L}imit {S}hape of {C}onvex {L}attice {P}olygons.
\newblock {\em Discrete \& Computational Geometry}, 13:279--295, 1995.

\bibitem{Barany97}
I.~B\'ar\'any.
\newblock Affine perimeter and limit shape.
\newblock {\em Journal f\"ur die reine und angewandte Mathematik}, 484:71--84,
  1997.

\bibitem{Barany1999}
I.~B\'ar\'any.
\newblock Sylvester’s question: the probability that $n$ points are in convex
  position.
\newblock {\em Annals of Probability}, 27:2020--2034, 1999.

\bibitem{BP2006}
I.~B\'ar\'any and M.~Prodromou.
\newblock On maximal convex lattice polygons inscribed in a plane convex set.
\newblock {\em Israel Journal of Mathematics}, 154:337--360, 2006.

\bibitem{Barthe1998}
F.~Barthe.
\newblock An extremal property of the mean width of the simplex.
\newblock {\em Mathematische Annalen}, 310:685--693, 1998.

\bibitem{Blaschke1917}
W.~Blaschke.
\newblock {\"U}ber affine {G}eometrie {VII}: {N}eue {E}xtremeingenschaften von
  {E}llipse und {E}llipsoid.
\newblock {\em Ber. Verh. S\"achs. Akad. Wiss., Math. Phys. Kl.}, 69:412--420,
  1917.

\bibitem{Blaschke1923}
W.~Blaschke.
\newblock {\em Vorlesunger \"uber Differentialgeometrie: Affine
  Differentialgeometrie}.
\newblock Springer Verlag, Berlin, 1923.

\bibitem{BourgainMilman}
J.~Bourgain and V.~D. Milman.
\newblock New volume ratio properties for convex symmetric bodies in
  $\mathbb{R}^n$.
\newblock {\em Inventiones mathematicae}, 88:319--340, 1987.

\bibitem{isotropic-book}
S.~Brazitikos, A.~Giannopoulos, P.~Valettas, and B.-H. Vritsiou.
\newblock Geometry of isotropic convex bodies.
\newblock volume 196 of {\em Mathematical Surveys and Monographs}. American
  Mathematical Society, 2014.

\bibitem{UmutDeping}
U.~Caglar and D.~Ye.
\newblock Affine isoperimetric inequalities in the functional
  {O}rlicz-{B}runn-{M}inkowski theory.
\newblock {\em Advances in Applied Mathematics}, 81:78--114, 2016.

\bibitem{CZY2011}
F.~Chen, J.~Zhou, and C.~Yang.
\newblock On the reverse {O}rlicz {B}usemann-{P}etty centroid inequality.
\newblock {\em Advances in Applied Mathematics}, 47:820--828, 2011.

\bibitem{Davies92}
L.~Davies.
\newblock The {A}symptotics of {R}ousseeuw's {M}inimum {V}olume {E}llipsoid
  {E}stimator.
\newblock {\em Annals of Statistics}, 20(4), 1992.

\bibitem{FLM}
T.~Figiel, J.~Lindenstrauss, and V.~D. Milman.
\newblock The dimension of almost spherical sections of convex bodies.
\newblock {\em Acta Mathematica}, 139:53--94, 1977.

\bibitem{GHXY2020}
R.~J. Gardner, D.~Hug, S.~Xing, and D.~Ye.
\newblock General volumes in the {O}rlicz-{B}runn-{M}inkowski theory and a
  related {M}inkowski problem {II}.
\newblock {\em Calculus of Variations and Partial Differential Equations},
  59(1):Article no. 15, 2020.

\bibitem{GM2000}
A.~A. Giannopoulos and V.~D. Milman.
\newblock Extremal problems and isotropic positions of convex bodies.
\newblock {\em Israel Journal of Mathematics}, 117:29--60, 2000.

\bibitem{Giladietal}
O.~Giladi, H.~Huang, C.~Sch\"utt, and E.~Werner.
\newblock Constrained convex bodies with extremal affine surface areas.
\newblock {\em Journal of Functional Analysis}, 279(3):Article 108531, 2020.

\bibitem{GLS1993}
M.~Gr\"otschel, L.~Lov\'asz, and A.~Schrijver.
\newblock Geometric algorithms and combinatorial optimization.
\newblock Algorithms and Combinatorics. Springer, 2 edition, 1993.

\bibitem{Gruber1993}
P.~M. Gruber.
\newblock Asymptotic estimates for best and stepwise approximation of convex
  bodies {II}.
\newblock {\em Forum Math.}, 5:521--538, 1993.

\bibitem{GM2011}
O.~Gu\'edon and E.~Milman.
\newblock Interpolating thin-shell and sharp large-deviation estimates for
  isotropic log-concave measures.
\newblock {\em Geom. Funct. Anal.}, 21:1043--1068, 2011.

\bibitem{HLYZ2010}
C.~Haberl, E.~Lutwak, D.~Yang, and G.~Zhang.
\newblock The even {O}rlicz {M}inkowski problem.
\newblock {\em Advances in Mathematics}, 224:2485--2510, 2010.

\bibitem{HuangHe2012}
Q.~Huang and B.~He.
\newblock On the {O}rlicz {M}inkowski {P}roblem for {P}olytopes.
\newblock {\em Discrete \& Computational Geometry}, 48:281--297, 2012.

\bibitem{Hug96}
D.~Hug.
\newblock Contributions to affine surface area.
\newblock {\em manuscripta mathematica}, 91:283--301, 1996.

\bibitem{Hug96-2}
D.~Hug.
\newblock Curvature relations and affine surface area for a general convex body
  and its polar.
\newblock {\em Results in Mathematics}, 29:233--248, 1996.

\bibitem{JianLu2019}
H.~Jian and J.~Lu.
\newblock Existence of solutions to the {O}rlicz-{M}inkowski problem.
\newblock {\em Advances in Mathematics}, 344:262--288, 2019.

\bibitem{John1948}
F.~John.
\newblock Extremum problems with inequalities as subsidiary conditions.
\newblock Studies and Essays Presented to R. Courant on his 60th Birthday,
  pages 187--204. Interscience Publishers, Inc., New York, 1948.

\bibitem{Kuperberg08}
G.~Kuperberg.
\newblock From the {M}ahler {C}onjecture to {G}auss {L}inking {I}ntegrals.
\newblock {\em Geometric and Functional Analysis}, 18:870--892, 2008.

\bibitem{Leichtweiss1986}
K.~Leichtwei{\ss}.
\newblock Zur {A}ffinoberfl\"ache konvexer {K}\"orper.
\newblock {\em manuscripta mathematica}, 91(3):429--464, 1986.

\bibitem{Leichtweiss}
K.~Leichtwei{\ss}.
\newblock {\"U}ber einige {E}igenschaften der {A}ffinoberfl\"ache beliebiger
  konvexer {K}\"orper.
\newblock {\em Results in Mathematics}, 13:255--282, 1988.

\bibitem{LiLeng2011}
A.~Li and G.~Leng.
\newblock A {N}ew {P}roof of the {O}rlicz {B}usemann-{P}etty {C}entroid
  {I}nequality.
\newblock {\em Proceedings of the American Mathematical Society},
  139(4):1473--1481, 2011.

\bibitem{LiuZhou2013}
J.~Liu and B.~Zhou.
\newblock An obstacle problem for a class of {M}onge–{A}mp\`ere type
  functionals.
\newblock {\em Journal of Differential Equations}, 254:1306--1325, 2013.

\bibitem{Ludwig1999}
M.~Ludwig.
\newblock Asymptotic approximation of smooth convex bodies by general
  polytopes.
\newblock {\em Mathematika}, 46:103--125, 1999.

\bibitem{Ludwig2010}
M.~Ludwig.
\newblock General affine surface areas.
\newblock {\em Advances in Mathematics}, 224(6):2346--2360, 2010.

\bibitem{LudwigReitzner}
M.~Ludwig and M.~Reitzner.
\newblock A classification of $\text{SL}(n)$ invariant valuations.
\newblock {\em Annals of Mathematics}, 172(2):1219--1267, 2010.

\bibitem{Lutwak91}
E.~Lutwak.
\newblock Extended affine surface area.
\newblock {\em Advances in Mathematics}, 85:39--68, 1991.

\bibitem{Lutwak96}
E.~Lutwak.
\newblock The {B}runn-{M}inkowski-{F}irey {T}heory {II}: {A}ffine and
  {G}eominimal {S}urface {A}reas.
\newblock {\em Advances in Mathematics}, 118:244--294, 1996.

\bibitem{LYZ2010-2}
E.~Lutwak, D.~Yang, and G.~Zhang.
\newblock Orlicz centroid bodies.
\newblock {\em Journal of Differential Geometry}, 84(2):365--387, 2010.

\bibitem{LYZ2010-1}
E.~Lutwak, D.~Yang, and G.~Zhang.
\newblock Orlicz projection bodies.
\newblock {\em Advances in Mathematics}, 223(1):220--242, 2010.

\bibitem{MeyerWerner2000}
M.~Meyer and E.~Werner.
\newblock On the $p$-affine surface area.
\newblock {\em Advances in Mathematics}, 152:288--313, 2000.

\bibitem{MilmanPajor2000}
V.~D. Milman and A.~Pajor.
\newblock Entropy and {A}symptotic {G}eometry of {N}on-{S}ymmetric {C}onvex
  {B}odies.
\newblock {\em Advances in Mathematics}, 152(2):314--335, 2000.

\bibitem{Nazarov2012}
F.~Nazarov.
\newblock The {H}{\"o}rmander {P}roof of the {B}ourgain--{M}ilman {T}heorem.
\newblock In B.~Klartag, S.~Mendelson, and V.~D. Milman, editors, {\em
  Geometric Aspects of Functional Analysis: Israel Seminar 2006--2010}, pages
  335--343. Springer Berlin Heidelberg, Berlin, Heidelberg, 2012.

\bibitem{Petty1985}
C.~M. Petty.
\newblock Affine isoperimetric problems.
\newblock {\em Annals of the New York Academy of Sciences}, 440(1):113--127,
  1985.

\bibitem{Pisier1989}
G.~Pisier.
\newblock The volume of convex bodies and {B}anach space geometry.
\newblock volume~94 of {\em Cambridge Tracts in Mathematics}. Cambridge
  University Press, 1989.

\bibitem{Rousseeuw1985}
P.~Rousseeuw.
\newblock Multivariate estimation with high breakdown point.
\newblock In W.~Grossmann, G.~Pflug, I.~Vincze, and W.~Wertz, editors, {\em
  Mathematical Statistics and Applications}, pages 283--297. Reidel, 1985.

\bibitem{Santalo1949}
L.~A. Santal\'o.
\newblock Un invariante afin para los cuerpos convexos del espacio des $n$
  dimensiones.
\newblock {\em Portugaliae Mathematica}, 8:155--161, 1949.

\bibitem{Schmuck1992}
M.~Schmuckenschl\"ager.
\newblock The distribution function of the convolution square of a convex
  symmetric body in $\mathbb{R}^n$.
\newblock {\em Israel Journal of Mathematics}, 78:309--334, 1992.

\bibitem{Schmuck1999}
M.~Schmuckenschl\"ager.
\newblock An extremal property of the regular simplex.
\newblock In {\em Convex Geometric Analysis}, volume~34 of {\em Math. Sci. Res.
  Inst. Publ.}, pages 199--202. Cambridge University Press, 1999.

\bibitem{SchneiderBook}
R.~Schneider.
\newblock {\em Convex Bodies: The Brunn–Minkowski Theory}.
\newblock Encyclopedia of Mathematics and its Applications. Cambridge
  University Press, 2 edition, 2013.

\bibitem{Schneider2014}
R.~Schneider.
\newblock Affine surface area and convex bodies of elliptic type.
\newblock {\em Period. Math. Hung.}, 69:120--125, 2014.

\bibitem{SW1990}
C.~Sch\"utt and E.~Werner.
\newblock The convex floating body.
\newblock {\em Mathematica Scandinavica}, 66(2):275--290, 1990.

\bibitem{SW2003}
C.~Sch{\"u}tt and E.~Werner.
\newblock Polytopes with vertices chosen randomly from the boundary of a convex
  body.
\newblock In Vitali~D. Milman and Gideon Schechtman, editors, {\em Geometric
  Aspects of Functional Analysis: Israel Seminar 2001-2002}, pages 241--422.
  Springer Berlin Heidelberg, Berlin, Heidelberg, 2003.

\bibitem{SW2004}
C.~Schütt and E.~Werner.
\newblock Surface bodies and $p$-affine surface area.
\newblock {\em Advances in Mathematics}, 187(1):98--145, 2004.

\bibitem{STW2006}
W.~Sheng, N.~S. Trudinger, and X.~Wang.
\newblock Enclosed convex hypersurfaces with maximal affine area.
\newblock {\em Mathematische Zeitschrift}, 252:497--510, 2006.

\bibitem{Szarek1978}
S.~Szarek.
\newblock On {K}a\u{s}in’s almost {E}uclidean orthogonal decomposition of
  $\ell_1^n$.
\newblock {\em Bull. Acad. Polon. Sci.}, 26:691--694, 1978.

\bibitem{TY1997}
M.~J. Todd and E.~A. Y{\i}ld{\i}r{\i}m.
\newblock On {K}hachiyan's algorithm for the computation of minimum-volume
  enclosing ellipsoids.
\newblock {\em Discrete Applied Mathematics}, 155(13):1731--1744, 2007.

\bibitem{TW2004}
N.~S. Trudinger and X.~Wang.
\newblock The affine plateau problem.
\newblock {\em Journal of the American Mathematical Society}, 18(2):253--289,
  2004.

\bibitem{VanAelstRousseeuw}
S.~Van~Aelst and P.~Rousseeuw.
\newblock Minimum volume ellipsoid.
\newblock {\em Wiley Interdisciplinary Reviews: Computational Statistics},
  1(1):71--82, 2009.

\bibitem{WG2007}
W.~Weidong and L.~Gangsong.
\newblock ${L}_p$-mixed affine surface area.
\newblock {\em Journal of Mathematical Analysis and Applications},
  335:341--354, 2007.

\bibitem{Werner1994}
E.~Werner.
\newblock Illumination bodies and affine surface area.
\newblock {\em Studia Math.}, 110:257--269, 1994.

\bibitem{WernerYe08}
E.~Werner and D.~Ye.
\newblock New ${L}_p$-affine isoperimetric inequalities.
\newblock {\em Advances in Mathematics}, 218:762--780, 2008.

\bibitem{WernerYe2010}
E.~Werner and D.~Ye.
\newblock Inequalities for mixed $p$-affine surface area.
\newblock {\em Mathematische Annalen}, 347(3), 2010.

\bibitem{XJL2014}
D.~Xi, H.~Jin, and G.~Leng.
\newblock The {O}rlicz {B}runn-{M}inkowski inequality.
\newblock {\em Advances in Mathematics}, 260:350--374, 2014.

\bibitem{Deping2012}
D.~Ye.
\newblock Inequalities for general mixed affine surface areas.
\newblock {\em Journal of the London Mathematical Society}, 85(1):101--120,
  2012.

\bibitem{Deping2014Steiner}
D.~Ye.
\newblock On the {M}onotone {P}roperties of {G}eneral {A}ffine {S}urface
  {A}reas under the {S}teiner {S}ymmetrization.
\newblock {\em Indiana University Mathematics Journal}, 63(1):1--19, 2014.

\bibitem{Deping2015}
D.~Ye.
\newblock New {O}rlicz affine isoperimetric inequalities.
\newblock {\em Journal of Mathematical Analysis and Applications},
  427(2):905--929, 2015.

\bibitem{Zhu2012}
G.~Zhu.
\newblock The {O}rlicz centroid inequality for star bodies.
\newblock {\em Advances in Applied Mathematics}, 48:432--445, 2012.

\bibitem{ZX2014}
D.~Zou and G.~Xiong.
\newblock Orlicz-{J}ohn ellipsoids.
\newblock {\em Advances in Mathematics}, 265:132--168, 2014.

\end{thebibliography}

\vspace{3mm}

\noindent {\sc Department of Mathematics \& Computer Science, Longwood University, 23909}

\noindent {\it E-mail address:} {\tt hoehnersd@longwood.edu}

\end{document}